\newtheorem{theorem}{Theorem}
\newtheorem{lemma}[theorem]{Lemma}
\newtheorem{corollary}[theorem]{Corollary}
\newtheorem{proposition}[theorem]{Proposition}
\theoremstyle{remark}
\newtheorem*{remark}{Remark}
\newtheorem*{example}{Example}
\title{Relatively free algebras of finite rank}
\author{Thiago Castilho de Mello}
\address{Universidade Federal de S\~ao Paulo, Instituto de Ci\^encia e Tecnologia}
\email{tcmello@unifesp.br}
\author{Felipe Yukihide Yasumura}
\address{Universidade de S\~ao Paulo, Instituto de Matem\'atica e Estat\'istica}
\email{fyyasumura@ime.usp.br}
\dedicatory{Dedicated to Professor Antonio Giambruno on his 70th birthday.}
\begin{document}
\maketitle

\begin{abstract}
  Let $\mathbb{K}$ be a field of characteristic zero and $B=B_0+B_1$ a finite dimensional associative superalgebra. In this paper we investigate the polynomial identities of the relatively free algebras of finite rank of the variety $\mathfrak V$ defined  by the Grassmann envelope of $B$. We also consider the $k$-th Grassmann Envelope of $B$, $G^{(k)}(B)$, constructed with the $k$-generated Grassmann algebra, instead of the infinite dimensional Grassmann algebra. We specialize our studies for the algebra $UT_2(G)$ and $UT_2(G^{(k)})$, which can be seen as the Grassmann envelope and $k$-th Grassmann envelope, respectively, of the superalgebra $UT_2(\mathbb{K}[u])$, where $u^2=1$.
\end{abstract}

\section{Introduction}
\label{Introduction}

In this paper $\mathbb{K}$ will denote a field of characteristic 0. If $X=\{x_1,x_2,\dots\}$ is an infinite countable set, we denote by $\mathbb{K}\langle X\rangle$ the free associative unitary algebra freely generated by $X$. If $A$ is an associative algebra, we say that it is an \emph{algebra with polynomial identity} (PI-algebra, for short) if there exists a nonzero polynomial $f=f(x_1,\dots,x_n)\in\mathbb{K}\langle X\rangle$ such that $f(a_1,\dots, a_n)=0$, for arbitrary $a_1$, \dots, $a_n\in A$. In this case, we say that $f$ is a (polynomial) identity of $A$, or simply that $A$ satisfies $f$.

If $A$ is a PI-algebra the set $T(A)=\{f\in\mathbb{K}\langle X\rangle\,|\, f \text{ is an identity of } A\}$ is an ideal of $\mathbb{K}\langle X\rangle$  invariant under any endomorphism of the algebra $\mathbb{K}\langle X\rangle$. An ideal with this property is called a \emph{T-ideal} or \emph{verbal ideal} of $\mathbb{K}\langle X\rangle$. We refer the reader to \cite{drenskybook,GZ} for the basic theory of PI-algebras.

The T-ideals play a central role in the theory of PI-algebras, and they are often studied through the equivalent notion of varieties of algebras. If
$\mathcal{F}$ is a subset of $\mathbb{K}\langle X\rangle$, the class of all algebras satisfying the identities from $\mathcal{F}$ is called \emph{the variety of (associative) algebras defined by $\mathcal{F}$} and denoted by $\mathrm{var}(\mathcal{F})$.
Given $\mathfrak{V}$ and $\mathfrak{W}$ varieties of algebras, we say that $\mathfrak{W}$ is a subvariety of $\mathfrak{V}$ if $\mathfrak{W}\subseteq \mathfrak{V}$. If $\mathfrak{V}$ is a variety of algebras, we denote by $T(\mathfrak{V})$ the set of identities satisfied by all algebras in $\mathfrak{V}$.
One can easily see that $T(\mathfrak{V})$ is a T-ideal of $\mathbb{K}\langle X\rangle$. It is called \emph{the T-ideal of $\mathfrak{V}$}.
If $\mathfrak{V}=\mathrm{var}(\mathcal{F})$, we say that the elements of $T(\mathfrak{V})$ are \emph{consequences of} (or \emph{follow from}) the elements of $\mathcal{F}$.

Let $\mathfrak V$ be a variety of associative algebras. In the theory of algebras with polynomial identities, an important role is played by the so called \textit{relatively free algebras of $\mathfrak V$}. The relatively free algebra of $\mathfrak V$ freely generated by a set $X$ is an algebra $F_X(\mathfrak{V}) \in \mathfrak V$, with an inclusion map $\iota : X \xhookrightarrow{}{}$ $F_X(\mathfrak{V})$, satisfying the following universal property:

Given any algebra $A\in \mathfrak V$, and a map $\varphi_0: X \longrightarrow A$, there exists a unique algebra homomorphism $\varphi:F_X(\mathfrak V) \longrightarrow A$ such that $ \varphi\circ \iota =\varphi_0$. 

It is a simple exercise to show that \[F_X(\mathfrak V)\cong \dfrac{\mathbb{K}\langle X \rangle}{T(\mathfrak{V})}.\] Moreover, it is well known that for two given sets $X$ and $Y$, the algebras $F_X(\mathfrak V)$ and  $F_Y(\mathfrak V)$ are isomorphic if and only if $X$ and $Y$ have the same cardinality. Therefore if $|X|=k\in \mathbb N$, we denote  $F_X(\mathfrak V)$ simply by  $F_k(\mathfrak V)$ and if $X$ is a countably infinite set we denote it simply by  $F(\mathfrak V)$.

The first studies about relatively free algebras are due to Procesi (see \cite{ProcesiBook, Procesi}), when dealing with the so called \emph{algebra of generic matrices}, which is isomorphic to the relatively free algebra in the variety generated by $M_n(\mathbb{K})$. This algebra is a fundamental object in invariant theory and has noteworthy properties. For instance, it has no zero divisors and one can work with its quotient ring, the so called \emph{generic division algebra} (see \cite{Formanek}). Another interesting property is that given a polynomial $f(x_1,\dots,x_k)$, we have that $f$ is a central polynomial for $M_n(\mathbb{K})$ if and only if $\overline{f(x_1,\dots,x_k)}$ is a central element of $F_k(M_n(\mathbb{K}))$. Such properties do not hold in any variety. A simple example can be seen in the variety generated by the algebra $M_{1,1}=\begin{pmatrix} G_0 & G_1 \\ G_1 & G_0 \end{pmatrix}$, where $f(x,y)=[x,y]^2$ is a central element in $F_2(M_{1,1})$, but it is not a central polynomial for $M_{1,1}$ (see \cite{KoshM, Popov}). 

We say that a variety of algebras $\mathfrak{V}$ has a finite basic rank if $\mathfrak{V}=\mathrm{var}(A)$, where $A$ is a finitely generated
algebra. The minimal number of generators of such an algebra is the {\it{basic rank} of the variety $\mathfrak{V}$}.

Of course that the variety generated by the algebra $F(\mathfrak V)$ is $\mathfrak V$ itself, and that for any $k$, $F_k(\mathfrak{V})\in \mathfrak V$, but it is not true in general that there exists $k$ such that $F_k(\mathfrak{V})$ generates $\mathfrak{V}$. 

The basic rank of a variety $\mathfrak {V}$ can be characterized in terms of its relatively free algebras, as we can see in the following easy-to-prove proposition.

\begin{proposition}
	The basic rank of a variety $\mathfrak{V}$ is the least integer $k$ such that $\mathfrak{V}=\mathrm{var}(F_k(\mathfrak{V}))$
\end{proposition}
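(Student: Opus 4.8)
The plan is to establish the claimed equality of integers by proving two opposite inequalities. Write $r$ for the basic rank of $\mathfrak{V}$ and let $s$ denote the least integer $k$ for which $\mathfrak{V}=\mathrm{var}(F_k(\mathfrak{V}))$; I will show $r\le s$ and $s\le r$. I work under the standing assumption that $\mathfrak{V}$ has finite basic rank, since otherwise both quantities are understood to be infinite. In fact the argument below also shows that the set of admissible $k$ is nonempty precisely when the basic rank is finite, so the minimum defining $s$ makes sense exactly in the relevant case.

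For $r\le s$, I would observe that the relatively free algebra $F_s(\mathfrak{V})$ is by construction generated by the $s$ images of the free generators, hence is an $s$-generated algebra. Since by the definition of $s$ we have $\mathfrak{V}=\mathrm{var}(F_s(\mathfrak{V}))$, the variety $\mathfrak{V}$ is generated by an algebra admitting $s$ generators. By the definition of basic rank as the minimal number of generators of an algebra generating $\mathfrak{V}$, this yields $r\le s$ immediately.

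The reverse inequality $s\le r$ is where the real content lies, and the key step is to produce a surjection onto a generating algebra. By definition of $r$ there is an $r$-generated algebra $A$, say generated by $a_1,\dots,a_r$, with $\mathfrak{V}=\mathrm{var}(A)$, equivalently $T(A)=T(\mathfrak{V})$. Since $A\in\mathfrak{V}$, the universal property of $F_r(\mathfrak{V})$ furnishes a homomorphism $\varphi:F_r(\mathfrak{V})\to A$ sending the free generators to the $a_i$; because the $a_i$ generate $A$, this $\varphi$ is surjective. The crucial observation is then that a polynomial identity of any algebra is inherited by all of its homomorphic images: if $f\in T(F_r(\mathfrak{V}))$, then evaluating $f$ on arbitrary elements of $A$ written as $\varphi$-images, and using that $\varphi$ is a homomorphism, gives $f\in T(A)=T(\mathfrak{V})$. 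Hence $T(F_r(\mathfrak{V}))\subseteq T(\mathfrak{V})$, while the opposite inclusion $T(\mathfrak{V})\subseteq T(F_r(\mathfrak{V}))$ holds automatically because $F_r(\mathfrak{V})\in\mathfrak{V}$. Thus $T(F_r(\mathfrak{V}))=T(\mathfrak{V})$, that is $\mathfrak{V}=\mathrm{var}(F_r(\mathfrak{V}))$, and since $s$ is the least integer with this property we conclude $s\le r$.

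Combining the two inequalities gives $r=s$. I expect the only genuinely delicate point to be the reduction of an arbitrary identity of $F_r(\mathfrak{V})$, which may involve more than $r$ variables, to a condition that can be read off from the generators of $A$; this is resolved cleanly by the surjectivity of $\varphi$ together with the endomorphism-invariance of $T$-ideals, so no variable-counting subtlety actually arises. The remaining verifications — that $\varphi$ is surjective and that homomorphic images preserve identities — are entirely routine.
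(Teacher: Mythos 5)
Your proof is correct. The paper states this proposition without proof (it is offered as an ``easy-to-prove'' fact), and your two-inequality argument --- $r\le s$ because $F_s(\mathfrak{V})$ is an $s$-generated algebra generating $\mathfrak{V}$, and $s\le r$ via the surjection $F_r(\mathfrak{V})\to A$ given by the universal property together with the fact that polynomial identities pass to homomorphic images, so that $T(F_r(\mathfrak{V}))=T(A)=T(\mathfrak{V})$ --- is exactly the standard argument the authors intend, with the variable-count issue correctly dispatched by surjectivity.
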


As examples, we mention that the variety generated by the Grassmann algebra of an infinite dimensional vector space has infinite basic rank, while the algebra of $n\times n$ matrices over the field ($n>1$) generates a basic rank 2 variety (since $M_n(\mathbb{K})$ is a 2-generated algebra).

A natural problem in the theory of PI-algebras is to classify in some sense the subvarieties in a given variety of algebras $\mathfrak V$. A very important role in this direction is played by the exponent of a variety $\mathfrak V$. Proving a conjecture of Amitsur, Giambruno and Zaicev showed that for any variety of associative algebras over a field of characteristic zero the exponent exists and is an integer \cite{GiamZa1,GiamZa2} (see also \cite{GZ}). Therefore, it was natural to classify varieties of algebras in terms of its exponents. A successful approach was the classification in terms of forbidden algebras. For example, Kemer showed that the varieties of exponent 1 are exactly those varieties not containing the infinite dimensional Grassmann algebra and the $2\times 2$ algebra of upper triangular matrices. Similar results were given for varieties of exponent 2, with a list of 5 forbidden algebras (see \cite{GiamZa3}).

The classification of subvarieties of important varieties of algebras were also studied. For instance, the classification of the subvarieties of the variety $\mathfrak{G}$, generated by the infinite dimensional Grassmann algebra, was given by La Mattina \cite{LaM}, and subvarieties of the variety generated by $M_2(\mathbb{K})$ were studied by Drensky \cite{Drenskysubv}. The classification of subvarieties of a given variety is a difficult task. A less accurate, but interesting approach, is the classification up to asymptotic equivalence of varieties. This notion was introduced by Kemer in \cite{Kemersubv}, where he classified subvarieties of a variety satisfying the identity $St_4$ (the standard identity of degree 4), up to asymptotic equivalence. We say that two T-ideals are \emph{asymptotically equivalent} if they satisfy the same proper polynomials from a certain degree on. We recall that proper polynomials are those which are linear combinations of products of commutators. In a similar way, two varieties are asymptotically equivalent if their T-ideals are so. This notion was also used to classify, up to asymptotic equivalence, the subvarieties of the variety $\mathfrak M_5$, generated by the identity $[x_1,x_2][x_3,x_4,x_5]$ (see \cite{DDN}).  Such variety can be realized as the variety generated by the algebra $A=\begin{pmatrix}G_0 & G \\ 0 & G \end{pmatrix}$. This is one of the forbidden algebras in the classification of varieties of exponent 2.

If $\mathfrak V$ is a variety of associative algebras of infinite basic rank, we have a lattice of T-ideals
\[T(F_1(\mathfrak V))\supseteq T(F_2(\mathfrak V))\supseteq \cdots T(F_k(\mathfrak V))\supseteq T(F_{k+1}(\mathfrak V))\supseteq  \cdots \supseteq T(F(\mathfrak V))=T(\mathfrak V).\]
As a consequence of Lemma \ref{lem} below, one can easily see that there is an infinite number of proper inclusions above.

A natural but difficult problem in general is to describe for all $k$, the T-ideals $T(F_k(\mathfrak V))$. This task was realized only for a small list of varieties of infinite basic rank, namely: the variety $\mathfrak{G}$, the variety $\mathfrak M_5$ \cite{DM}, and the variety generated by the algebra $M_{1,1}(G)$. The last only for $k=2$ (see \cite{KMb}).

In the mentioned examples, the knowledge of the the identities of the relatively free algebras of finite rank, were useful to give an alternative description to the subvarieties of the given variety. For instance, if $A\in \mathfrak G$ is a unitary algebra, it is PI-equivalent to $\mathbb{K}$, $G$, or $F_{2k}(\mathfrak G)$, for some $k$, and if $A\in \mathfrak{M}_5$, then, it is PI-equivalent to $\mathbb{K}$, $UT_2(\mathbb{K})$, $E$, $F_{2k}(\mathfrak M_5)$ or $F_{2k}(\mathfrak M_5)\oplus G$, for some $k$.

We believe that the knowledge of the identities of the relatively free algebras of finite rank of a given variety of infinite basic rank may play an important role in the description of its subvarieties. This is a motivation to the study of such identities.

From Kemer's theory \cite{kemer} we know that every finitely generated algebra satisfies the same identities of a finite dimensional algebra. In light of this, given a variety $\mathfrak V$, it is interesting to find finite dimensional algebras $A_k\in \mathfrak V$ such that $T(A_k)=T(F_k(\mathfrak V))$, for all $k$. This was done to the above-mentioned examples. In those cases, it was verified that for all $k$, the algebra $A_k$ was obtained with the construction we describe below. 

It is well known from Kemer's theory that the variety $\mathfrak V$ is generated by the \emph{Grassmann envelope} of a suitable finite dimensional superalgebra $B=B_0+B_1$. Recall that the Grassmann  envelope of $B$ is given by $G(B)=G_0\otimes B_0+G_1\otimes B_1$, i.e., the even part of the superalgebra $G\otimes B$.
Similarly, one can define the $k$-th Grassmann envelope of $B$ as $G^{(k)}(B) = G^{(k)}_0\otimes B_0+G^{(k)}_1 \otimes B_1$, where $G^{(k)}$ is the Grassmann algebra of a $k$-dimensional vector space over $K$.

If $B=B_0+B_1$ is the superalgebra (which exists by Kemer theory) satisfying $T(G(B))=T(\mathfrak V)$, the above-mentioned examples satisfy the following interesting property: 

\begin{equation}\label{=}
T(F_k(\mathfrak {V}))=T(G^{(k)}(B)),
\end{equation}
for any $k$, in the case $\mathfrak V=\mathfrak G$ or $\mathfrak V=\mathfrak M_5$. In the case $\mathfrak V=var (M_{1,1})$ we only know it for $k=2$ (see \cite{DM}).

In light of the these results, it is an interesting problem to compare the T-ideals of $T(G^{(k)}(B))$ and $T(F_k(G(B)))$ for a given finite dimensional superalgebra $B$.

In the present paper, we obtain partial results on this problem for the variety $\mathfrak G_2$, generated by $UT_2(G)$. We show that the equality (\ref{=}) does not hold for this variety. 

We divide this paper as follows. We construct different models for the relatively free algebras in Section \ref{Models}, which will give different approaches for the problem. In Section \ref{genrem} we prove general facts that hold for the relatively free algebras of finite rank. In Section \ref{sec4}, we investigate the polynomial identities of $UT_2(G^{(k)})$, and exhibit a basis of identities when $2\le k\le 5$. Finally, in Section \ref{sec5}, we investigate the polynomial identities of $F_k(UT_2(G))$.

\section{Models for relatively free algebras}
\label{Models}

The relatively free algebras are quotients of the polynomial algebra $\mathbb{K}\langle X \rangle$. In particular, its elements are cosets of noncommutative polynomials. In order to have a more concrete object to work with, we will present some models of these relatively free algebras, which can simplify the problem of working with quotient classes.

The most simple example of a model for a relatively free  algebra is the \emph{algebra of generic matrices} (for the variety generated by an $n\times n$ matrix algebra over an infinite field $\mathbb{K}$). By a \emph{model}, we mean an algebra isomorphic to the given relatively free algebra.

Let $n$ be a positive integer, $X=\{x_{ij}^{(k)}\,|\, i,j\in \{1,\dots, n\}, k\in \mathbb{N}\}$ and $\mathbb{K}[X]$ be the algebra of commutative polynomials on the variables of $X$.
The algebra of generic $n\times n$ matrices is the subalgebra of $M_n(\mathbb{K}[X])$ generated by the matrices 
\[\xi_k=
\begin{pmatrix}
x_{11}^{(k)} & x_{12}^{(k)} & \cdots & x_{1n}^{(k)}\\
x_{21}^{(k)} & x_{22}^{(k)} & \cdots & x_{2n}^{(k)}\\
\vdots & \vdots & \ddots & \vdots \\
x_{n1}^{(k)} & x_{n2}^{(k)} & \cdots & x_{nn}^{(k)}\\
\end{pmatrix},\quad \text{for } k\in\mathbb{N}.\]

In a similar way, one can construct a model for a relatively free algebra of a variety generated by a finite dimensional algebra $A$. For, one only needs to fix a basis $B=\{v_1,\dots,v_n\}$ of $A$, and consider a subalgebra of $\mathbb{K}[X]\otimes A$, (where $X=\{x_i^{(k)}\,|\, i\in \{1,\dots, n\}, k\in \mathbb N\}$) generated by the elements 
\[\xi_k=\sum_{i=1}^{n}x_i^{(k)}\otimes v_i, \quad \text{for } k\in\mathbb{N}. \]

On the other hand, when dealing with a variety of infinite basic rank, the above construction is not possible.

Examples of models for relatively free algebras of infinite basic rank varieties were given by Berele in \cite{Berele}. More specifically, Berele constructed models for the relatively free algebras of varieties generated by $M_{n}(G)$ and $M_{a,b}(G)$ (the so called T-prime varieties), as algebras of matrices over the free supercommutative algebra $\mathbb{K}[X;Y]$.

We say that a superalgebra $A=A_0+A_1$ is supercommutative if its Grassmann envelope is commutative, i.e., if for any $a,b\in A_0\cup A_1$, one has $ab=(-1)^{\deg a \deg b}ba$.

Considering $\mathbb{K}$ as an infinite field, we proceed with a construction of a free supercommutative superalgebra.

Let $X$ and $Y$  be countably infinite sets. We build the algebra $\mathbb{K}\langle X\cup Y\rangle$ and induce on it a $\mathbb Z_2$-grading by defining $\deg x=0$, $x\in X$, and $\deg y=1$,
$y\in Y$. The algebra $\mathbb{K}\langle X\cup Y\rangle$ with such grading is called the \emph{free $\mathbb{Z}_2$-graded algebra}. If  $I$ is the ideal generated by the elements
$ab-(-1)^{\deg a \deg b}ba$, $a,b\in X\cup Y$, we define the \emph{ free supercommutative algebra}, denoted by $\mathbb{K}[X;Y]$,  as the quotient algebra
\[\mathbb{K}[X;Y]=\frac{\mathbb{K}\langle X\cup Y\rangle}{I}. \]

One can easily verify that given any supercommutative superalgebra $A=A_0+A_1$, and a map $\varphi_0:X\cup Y\longrightarrow A$ such that $\varphi_0(x)\in A_0$ if $x\in X$ and $\varphi_0(y)\in A_1$ if $y\in Y$, there exists a unique homomorphism $\varphi:\mathbb{K}[X;Y]\longrightarrow A$ which extends $\varphi_0$. 

Now given a finite dimensional superalgebra $B=B_0+B_1$, we proceed with a construction of a model for the relatively free algebra of the variety generated by $G(B)$. 
We remark that such is a completely general construction, since any variety of associative algebras is generated by $G(B)$, for some $B$, although given an arbitrary variety it is not a simple task to determine one such $B$.

Let us fix $\{u_1,\dots, u_r\}$ a basis of $B_0$ and $\{v_1,\dots, v_s\}$ a basis of $B_1$ and let us consider the sets $X=\{x_j^{(i)}\,|\, i\in \mathbb{N}, j\in \{1,\dots, r\}\}$ and $Y=\{y_j^{(i)}\,|\, i\in \mathbb{N}, j\in \{1,\dots, s\}\}$.

We consider the free supercommutative algebra $\mathbb{K}[X;Y]$ and for each $i\in \mathbb{N}$, we define $\xi_i \in B\otimes \mathbb{K}[X;Y]$ as

\[\xi_i=\sum_{j=1}^r u_j\otimes x_j^{(i)}+\sum_{j=1}^s v_j\otimes y_j^{(i)}\]

Then we have:

\begin{proposition}\label{supermodel}
	Let $n\in \mathbb{N}$, and define $\mathbb{K}[\xi_1,\xi_2,\dots]$ and $\mathbb{K}[\xi_1\,\dots,\xi_n]$ as the  subalgebras of $B\otimes \mathbb{K}[X;Y]$ generated by the elements $\xi_1,\xi_2,\dots$ and by the elements $\xi_1,\dots,\xi_n$, respectively. Then, the following isomorphisms hold:
	\[\mathbb{K}[\xi_1,\xi_2,\dots]\cong F(G(B))\] \[\mathbb{K}[\xi_1,\dots,\xi_n]\cong F_n(G(B))\]
\end{proposition}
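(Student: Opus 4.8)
The plan is to prove the first isomorphism and then observe that the second follows by the same argument restricted to finitely many generators. I would first show that the assignment $x_i \mapsto \xi_i$ extends to a well-defined surjective algebra homomorphism $\Phi : \mathbb{K}\langle X\rangle \to \mathbb{K}[\xi_1,\xi_2,\dots]$, and then identify its kernel with $T(G(B))$; the universal property characterizing $F(G(B)) \cong \mathbb{K}\langle X\rangle / T(G(B))$ would then give the result. Surjectivity is immediate since the $\xi_i$ generate the target subalgebra by definition, so the entire content is the computation of $\ker\Phi$.

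For the kernel, the key observation is that $\mathbb{K}[\xi_1,\xi_2,\dots]$ sits inside $B\otimes\mathbb{K}[X;Y]$, and $\mathbb{K}[X;Y]$ is the free supercommutative algebra. I would argue that $\ker\Phi \supseteq T(G(B))$ and $\ker\Phi \subseteq T(G(B))$ separately. For the inclusion $\ker\Phi \subseteq T(G(B))$, suppose $f(x_1,\dots,x_n) \in \ker\Phi$, i.e. $f(\xi_1,\dots,\xi_n) = 0$ in $B\otimes\mathbb{K}[X;Y]$; I must show $f$ is an identity of $G(B)$. The heart of the matter is to exhibit, for any substitution of elements $g_1,\dots,g_n \in G(B) = G_0\otimes B_0 + G_1\otimes B_1$, a superalgebra homomorphism from the supercommutative algebra generated by the entries $x_j^{(i)}, y_j^{(i)}$ into the (supercommutative) Grassmann algebra $G$ that carries each $\xi_i$ to the corresponding $g_i$. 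Concretely, an element of $G(B)$ has the form $\sum_j u_j\otimes a_j + \sum_j v_j\otimes b_j$ with $a_j \in G_0$ and $b_j \in G_1$; using the universal property of $\mathbb{K}[X;Y]$ (stated in the excerpt, with $X$ even and $Y$ odd), the map $x_j^{(i)}\mapsto a_j^{(i)}$, $y_j^{(i)}\mapsto b_j^{(i)}$ induces a homomorphism $\psi : \mathbb{K}[X;Y]\to G$ respecting the grading, and $\mathrm{id}_B\otimes\psi$ sends $\xi_i \mapsto g_i$. Applying $\mathrm{id}_B\otimes\psi$ to $f(\xi_1,\dots,\xi_n)=0$ yields $f(g_1,\dots,g_n)=0$, so $f\in T(G(B))$.

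For the reverse inclusion $T(G(B))\subseteq\ker\Phi$, I would use that $\mathbb{K}[\xi_1,\xi_2,\dots]\subseteq B\otimes\mathbb{K}[X;Y]$, and that $B\otimes\mathbb{K}[X;Y]$ is PI-equivalent to $G(B)$. Indeed $\mathbb{K}[X;Y]$ itself, being the free supercommutative algebra on infinitely many even and odd generators, satisfies precisely the identities of $G$, so $B\otimes\mathbb{K}[X;Y]$ is the Grassmann envelope of $B$ taken over $\mathbb{K}[X;Y]$ rather than $G$ and hence generates the same variety as $G(B)$; in particular every $f\in T(G(B))$ vanishes on $B\otimes\mathbb{K}[X;Y]$, so vanishes on the $\xi_i$. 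An alternative, more self-contained route is to verify directly that the subalgebra $\mathbb{K}[\xi_1,\xi_2,\dots]$ lies in $G(B)\otimes(\text{something})$ by tracking the $\mathbb{Z}_2$-degrees: the even generators $x_j^{(i)}$ pair with the $u_j\in B_0$ and the odd generators $y_j^{(i)}$ pair with $v_j\in B_1$, so each $\xi_i$ is an even element of the superalgebra $B\otimes\mathbb{K}[X;Y]$, forcing all products into the even part, which is exactly the Grassmann-envelope structure. Combining the two inclusions gives $\ker\Phi = T(G(B))$.

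The main obstacle I anticipate is making the surjectivity-onto-$G$ step in the first inclusion fully rigorous, namely showing that \emph{every} element of $G_0$ (resp. $G_1$) can be realized as the image of an even (resp. odd) generator under a grading-preserving homomorphism $\mathbb{K}[X;Y]\to G$. This requires that $G$ be a quotient of $\mathbb{K}[X;Y]$ compatibly with the grading and that arbitrary homogeneous elements of $G$ be hit; since $G$ is generated by its degree-one (odd) elements and $\mathbb{K}[X;Y]$ is free supercommutative on infinitely many odd generators $Y$, the universal property supplies such a homomorphism, and by choosing enough generators one can hit any finite collection of target elements. The finite-rank statement $\mathbb{K}[\xi_1,\dots,\xi_n]\cong F_n(G(B))$ then follows verbatim: restrict $\Phi$ to the free algebra on $x_1,\dots,x_n$, and the identical kernel computation gives $\ker = T(G(B))\cap\mathbb{K}\langle x_1,\dots,x_n\rangle$, which is exactly the relation defining $F_n(G(B))$.
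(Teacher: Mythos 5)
Your overall skeleton (the surjection $\Phi\colon\mathbb{K}\langle X\rangle\to\mathbb{K}[\xi_1,\xi_2,\dots]$ and the reduction to $\ker\Phi=T(G(B))$) and your proof of the inclusion $\ker\Phi\subseteq T(G(B))$ are exactly the paper's: the universal property of $\mathbb{K}[X;Y]$ yields, for any substitution in $G(B)$, a homomorphism $\mathrm{id}_B\otimes\psi$ taking each $\xi_i$ to the prescribed element, so $f(\xi_1,\dots,\xi_n)=0$ implies $f\in T(G(B))$. The problem is the reverse inclusion $T(G(B))\subseteq\ker\Phi$, where your main argument contains a false step. The full tensor product $B\otimes\mathbb{K}[X;Y]$ is \emph{not} ``the Grassmann envelope of $B$ over $\mathbb{K}[X;Y]$'': the envelope is the proper subalgebra $B_0\otimes\mathbb{K}[X;Y]_0+B_1\otimes\mathbb{K}[X;Y]_1$, whereas the full tensor product contains $B\otimes1\cong B$, and $T(G(B))\subseteq T(B)$ fails in general. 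Concretely, take $B=M_2(\mathbb{K})$ with $B_0=\mathbb{K}e_{11}+\mathbb{K}e_{22}$ and $B_1=\mathbb{K}e_{12}+\mathbb{K}e_{21}$, so that $G(B)=M_{1,1}(G)$. Then $[[x_1,x_2],[x_3,x_4],x_5]$ is an identity of $M_{1,1}(G)$ (every commutator in $M_{1,1}(G)$ has the form $\alpha I+\beta e_{12}+\gamma e_{21}$ with $\alpha\in G_0$, $\beta,\gamma\in G_1$, and the commutator of two such elements is a $G_0$-multiple of $I$, hence central), but it is not an identity of $M_2(\mathbb{K})$: evaluate at $x_1=e_{11}$, $x_2=e_{12}$, $x_3=e_{22}$, $x_4=e_{21}$, $x_5=e_{12}$. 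So this polynomial lies in $T(G(B))$ yet does not vanish on $B\otimes\mathbb{K}[X;Y]$. Relatedly, your justification via \emph{ungraded} PI-equivalence of $\mathbb{K}[X;Y]$ and $G$ cannot work even for the honest envelope: $G$ with the trivial grading and $G$ with its canonical grading satisfy the same ungraded identities, yet their envelopes with a fixed $B$ are completely different algebras; only the graded structure is relevant here.

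Your ``alternative, more self-contained route'' repairs the first error --- since each $\xi_i$ is even in the superalgebra $B\otimes\mathbb{K}[X;Y]$, the algebra $\mathbb{K}[\xi_1,\xi_2,\dots]$ does lie inside the envelope $B_0\otimes\mathbb{K}[X;Y]_0+B_1\otimes\mathbb{K}[X;Y]_1$ --- but it stops exactly where the work begins: observing that one is ``inside the Grassmann-envelope structure'' does not prove that this envelope satisfies the identities of $G(B)$, and that statement is the entire content of the inclusion. The paper proves it by writing $f(\xi_1,\dots,\xi_k)=\sum_j u_j\otimes m_j+\sum_j v_j\otimes n_j$ in coordinates relative to the chosen basis of $B$, noting that $f\in T(G(B))$ forces each coordinate $m_j,n_j$ to vanish under \emph{every} graded substitution of even/odd elements of $G$, i.e.\ to be a $\mathbb{Z}_2$-graded identity of $G$, and then invoking the key fact that a supercommutative polynomial which is a graded identity of $G$ is already zero in $\mathbb{K}[X;Y]$ (equivalently, the graded identities of $G$ are precisely the consequences of supercommutativity). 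Some version of this last fact --- absent from your proposal --- is indispensable; without it, neither your main route nor your fallback establishes $T(G(B))\subseteq\ker\Phi$.
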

\begin{proof}
	Define the algebra homomorphism $\eta:\mathbb{K}\langle t_1,t_2\dots \rangle \longrightarrow\mathbb{K}[\xi_1,\xi_2,\dots]$ by $\eta(t_i)=\xi_i$.  In particular, if $f(t_1,\dots,t_k)\in \mathbb{K}\langle t_1,t_2\dots \rangle$, then $\eta(f)=f(\xi_1,\dots,\xi_k)$.
	
	Of course $\eta$ is surjective. Once we show that $\ker \eta=T(G(B))$, the result is proved.
	
	Suppose $f\in \ker \eta$. This means that $f(\xi_1,\dots,\xi_k)=0$. 
	
	For each $i\in \{1,\dots,k\}$ we consider arbitrary elements $a_i$ of $G(B)$. These can be written as 
	\[a_i=\sum_{j=1}^r u_j\otimes g_j^{(i)}+\sum_{j=1}^s v_j\otimes h_j^{(i)}\] where $g_j^{(i)}$ and  $h_j^{(i)}$ are arbitrary even and odd elements of the Grassmann algebra respectively. Since $\mathbb{K}[X;Y]$ is the free supercommutative algebra, there exists a
	homomorphism $\varphi:\mathbb{K}[X;Y]\longrightarrow G$ extending the map $\varphi_0:X\cup Y\longrightarrow G$, given by $\varphi_0(x_j^{(i)})=g_j^{(i)}$ and $\varphi_0(y_j^{(i)})=h_j^{(i)}$. 
	
	From this, we define the homomorphism of algebras $\phi:B\otimes \mathbb{K}[X;Y]\longrightarrow B\otimes G$, given by $\varphi$ in $\mathbb{K}[X;Y]$ and fixing $B$. Then, for each $i$,
	\[a_i=\phi\left(\sum_{j=1}^r u_j\otimes x_j^{(i)}+\sum_{j=1}^s v_j\otimes y_j^{(i)}\right)=\phi(\xi_i)\]
	As a consequence, \[f(a_1,\dots,a_k)=\phi(\eta(f))=0,\] which means $f\in T(G(B))$.
	
	Conversely, suppose  $f\in T(G(B))$. We will show that $f(\xi_1,\dots,\xi_k)=0$.
	
	Write \[f(\xi_1,\dots,\xi_k)=\sum_{j=1}^r u_j\otimes m_j+\sum_{j=1}^s v_j\otimes n_j\] where $m_j$ and $n_j$ are $\mathbb{Z}_2$-graded polynomials of even and odd degree respectively in the commutative and anticommutative variables $x_p^{(q)}$ and $y_p^{(q)}$ of $\mathbb{K}[X;Y]$.
	
	As we have already shown, if \[a_i=\sum_{j=1}^r u_j\otimes g_j^{(i)}+\sum_{j=1}^s v_j\otimes h_j^{(i)}\] are arbitrary elements of $G(B)$, we have $f(a_1,\dots,a_k)=\phi(f(\xi_1,\dots,\xi_k)=\sum_{j}u_j\otimes m_j(g;h)+\sum_j v_j\otimes n_j(g,h)$. Since the  $a_i$s are arbitrary, so are the homogeneous elements $g$ and $h$ of even and  odd homogeneous degree in $G$. As a consequence, since $f\in T(G(B))$, $m_j$ and $n_j$ are $\mathbb{Z}_2$-graded identities of $G$ and since $\mathbb{K}[X;Y]$ is the free supercomutative algebra, it follows that $m_j=n_j=0$ in $\mathbb{K}[X;Y]$. What means that $f(\xi_1,\dots,\xi_k)=0$, finishing the proof.
	
	The case of $F_n(G(B))$ is analogous.
\end{proof}

It should be remarked that such model has appeared in \cite[Section 3.8]{GZ}.

Another possible model for relatively free algebras of some special kind of varieties is presented now.

Consider $R$ a PI-algebra and let $A$ be a subalgebra of $M_n(R)$ generated by matrix units $e_{ij}$. We give a model for the relatively free algebra of the variety generated by $A$ as a subalgebra of matrices over the relatively free algebra of $R$. The general construction can be find in the paper \cite[Lemma 6]{CdM}. Here we present it, as an example, for the particular case of $A=UT_2(G)$, which we will use below in the paper.

\begin{example}
	Let $\mathcal{U}$ and $\mathcal{U}_{k}$ be the subalgebras of $UT_2(F(\mathfrak G))$ generated by the generic matrices $\xi_1,\xi_2,\dots$ and $\xi_1,\dots,\xi_k$, respectively, where
	
	\[\xi_i=\begin{pmatrix}
	x_{11}^{(1)}+T(G) & x_{12}^{(i)}+T(G)\\
	0                 & x_{22}^{(2)}+T(G)
	\end{pmatrix}.\]
	
	Then 
	\begin{eqnarray*}
	&\mathcal {U} \cong F(UT_2(G))),&\\%
	&\mathcal {U}_k \cong F_k(UT_2(G))), \text{ for } k\in \mathbb{N}.&
	\end{eqnarray*}
	
	It is interesting to observe that when dealing with this model, one transfers the problem of dealing with cosets of matrices, to dealing with cosets of elements in the entries of such matrices. When the relatively free algebra of $R$ is well known, this construction is very useful. For instance, if $R$ is the field, its relatively free algebra is the polynomial algebra in commuting variables, and we are in the classical case of generic matrices. 
	In this paper we will deal with this model when $R$ is the Grassmann algebra, but since its relatively free algebra is easy to handle with, this will help us to obtain our results.
\end{example}

\section{General remarks\label{genrem}}
Let $\mathcal{A}$ be a finite-dimensional associative superalgebra.

As mentioned in \cite{DM}, if $k_1\le k_2$, then
\begin{equation}\label{eq}
T(\mathcal{A})\subseteq T(F_{k_2}(\mathcal{A}))\subseteq T(F_{k_1}(\mathcal{A})).
\end{equation}
Moreover,{\cite[Lemma 8]{DM}} the authors prove:

\begin{lemma}\label{lem}
	$T(F_n(\mathcal{A}))\cap\mathbb{K}\langle x_1,\ldots,x_n\rangle=T(\mathcal{A})\cap\mathbb{K}\langle x_1,\ldots,x_n\rangle$.\qed
\end{lemma}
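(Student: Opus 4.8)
The plan is to prove the two inclusions separately. The inclusion $T(\mathcal{A})\cap\mathbb{K}\langle x_1,\ldots,x_n\rangle\subseteq T(F_n(\mathcal{A}))\cap\mathbb{K}\langle x_1,\ldots,x_n\rangle$ is immediate: it is just the restriction to $\mathbb{K}\langle x_1,\ldots,x_n\rangle$ of the first inclusion in (\ref{eq}), which itself holds because $F_n(\mathcal{A})\in\mathrm{var}(\mathcal{A})$, so that every identity of $\mathcal{A}$ is automatically an identity of $F_n(\mathcal{A})$. All of the content is therefore in the reverse inclusion.

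For the reverse inclusion I would start from the concrete model $F_n(\mathcal{A})\cong\mathbb{K}\langle x_1,\ldots,x_n\rangle/J$, where $J=T(\mathcal{A})\cap\mathbb{K}\langle x_1,\ldots,x_n\rangle$ is the kernel of the canonical projection $\pi\colon\mathbb{K}\langle x_1,\ldots,x_n\rangle\to F_n(\mathcal{A})$, and write $\bar x_i=\pi(x_i)$ for the canonical generators. Let $f=f(x_1,\ldots,x_n)\in T(F_n(\mathcal{A}))$ be a polynomial involving only the first $n$ variables. Since $f$ is an identity of $F_n(\mathcal{A})$, it vanishes under every substitution of elements of $F_n(\mathcal{A})$; in particular, evaluating on the generators gives $f(\bar x_1,\ldots,\bar x_n)=0$.

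The key observation is that $f(\bar x_1,\ldots,\bar x_n)$ is exactly the image $\pi(f)$, because $\pi$ is an algebra homomorphism sending $x_i\mapsto\bar x_i$. Hence $\pi(f)=0$, that is, $f\in J=T(\mathcal{A})\cap\mathbb{K}\langle x_1,\ldots,x_n\rangle$, which yields the reverse inclusion and completes the proof.

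The point I expect to be the crux — even though the argument is short — is the matching between the number of variables occurring in $f$ and the rank $n$ of the relatively free algebra. It is essential that $f$ lie in $\mathbb{K}\langle x_1,\ldots,x_n\rangle$: this is precisely what allows the single substitution $x_i\mapsto\bar x_i$ into the $n$ available generators to act as a \emph{generic} evaluation that detects membership in $T(\mathcal{A})$. For a polynomial in more than $n$ variables there are not enough generators to play this role, which is exactly the reason the T-ideals $T(F_n(\mathcal{A}))$ and $T(\mathcal{A})$ can genuinely differ once polynomials in more than $n$ variables are allowed.
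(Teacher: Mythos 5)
Your proof is correct, and it is precisely the standard argument for this fact: the paper itself states the lemma without proof, citing \cite[Lemma 8]{DM}, where the same evaluation-at-the-canonical-generators idea is used, based on the identification $F_n(\mathcal{A})\cong\mathbb{K}\langle x_1,\ldots,x_n\rangle/\bigl(T(\mathcal{A})\cap\mathbb{K}\langle x_1,\ldots,x_n\rangle\bigr)$. Your closing remark correctly isolates the crux — that $f$ must involve at most $n$ variables so that the substitution $x_i\mapsto\bar{x}_i$ is a generic evaluation — which is exactly why the inclusions $T(F_n(\mathcal{A}))\supseteq T(\mathcal{A})$ can be proper for polynomials in more variables.
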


As a consequence, we have the following:
\begin{proposition}
	$T(\mathcal{A})=\bigcap_{n\ge1}T(F_n(\mathcal{A}))$. In particular, $F(\mathcal{A})$ is a subdirect product of the $\{F(F_n(\mathcal{A}))\}_{n\in\mathbb{N}}$.
\end{proposition}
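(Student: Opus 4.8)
The plan is to establish the set-theoretic equality first and then read off the subdirect-product description as a formal consequence. The inclusion $T(\mathcal{A})\subseteq\bigcap_{n\ge1}T(F_n(\mathcal{A}))$ is immediate from \eqref{eq}, which gives $T(\mathcal{A})\subseteq T(F_n(\mathcal{A}))$ for every $n$. For the reverse inclusion I would exploit the fact that every polynomial involves only finitely many variables. Given $f\in\bigcap_{n\ge1}T(F_n(\mathcal{A}))$, I would pick $m$ with $f\in\mathbb{K}\langle x_1,\dots,x_m\rangle$; then in particular $f\in T(F_m(\mathcal{A}))\cap\mathbb{K}\langle x_1,\dots,x_m\rangle$, and Lemma \ref{lem} identifies this intersection with $T(\mathcal{A})\cap\mathbb{K}\langle x_1,\dots,x_m\rangle$, so $f\in T(\mathcal{A})$. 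This yields $\bigcap_{n\ge1}T(F_n(\mathcal{A}))\subseteq T(\mathcal{A})$, and hence equality.

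For the subdirect-product claim I would write $F(\mathcal{A})=\mathbb{K}\langle X\rangle/T(\mathcal{A})$ and $F(F_n(\mathcal{A}))=\mathbb{K}\langle X\rangle/T(F_n(\mathcal{A}))$. The containment $T(\mathcal{A})\subseteq T(F_n(\mathcal{A}))$ then furnishes, for each $n$, a canonical surjective homomorphism $\pi_n\colon F(\mathcal{A})\to F(F_n(\mathcal{A}))$ sending $f+T(\mathcal{A})$ to $f+T(F_n(\mathcal{A}))$. Assembling these, I obtain $\pi=(\pi_n)_n\colon F(\mathcal{A})\to\prod_{n\ge1}F(F_n(\mathcal{A}))$, each of whose component maps is onto. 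Its kernel is $\bigl(\bigcap_{n\ge1}T(F_n(\mathcal{A}))\bigr)/T(\mathcal{A})$, which is trivial by the equality just proved, so $\pi$ is injective. An injective homomorphism into a direct product all of whose component projections are surjective is exactly the datum of a subdirect product, which gives the ``in particular'' statement.

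The whole argument is essentially routine, so I do not expect a genuine obstacle. The only point requiring any care is the reverse inclusion in the first part, and this is precisely where Lemma \ref{lem} does the real work: one must reduce the infinite intersection to a single finite rank $m$ by invoking the finite variable support of $f$, rather than attempting to argue directly for all $n$ simultaneously.
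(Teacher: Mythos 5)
Your proof is correct and follows essentially the same route as the paper: the forward inclusion is immediate from \eqref{eq}, and the reverse inclusion reduces to a single finite rank via the finite variable support of $f$ together with Lemma \ref{lem}, exactly as in the paper's argument. Your explicit verification of the subdirect-product claim (surjective projections, trivial kernel) is a standard unpacking of what the paper leaves implicit after the equality of T-ideals.
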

\begin{proof}
	Clearly $T(\mathcal{A})\subseteq\bigcap_{n\ge1}T(F_n(\mathcal{A}))$. Conversely, given
	$$
	f=f(x_1,\ldots,x_n)\in\bigcap_{n\ge1}T(F_n(\mathcal{A})),
	$$
	by Lemma \ref{lem}, we have $f\in T(F_n(\mathcal{A}))\cap\mathbb{K}\langle x_1,\ldots,x_n\rangle\subseteq T(\mathcal{A})$.
\end{proof}

Finally, we have the following alternative description of $F(\mathcal{A})$. We start with a lemma:
\begin{lemma}
	If $i\le j$, then there exists an algebra monomorphism $u_{ij}:F_i(\mathcal{A})\to F_j(\mathcal{A})$. Moreover, if $i\le j\le k$, then $u_{ik}=u_{jk}u_{ij}$.
\end{lemma}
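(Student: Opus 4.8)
The plan is to work with the explicit quotient model for the relatively free algebras rather than with abstract cosets. Recall from the Introduction that, writing $\mathfrak{V}=\mathrm{var}(\mathcal{A})$ and $T_n:=T(\mathcal{A})\cap\mathbb{K}\langle x_1,\ldots,x_n\rangle$, one has the identification
\[
F_n(\mathcal{A})\cong\frac{\mathbb{K}\langle x_1,\ldots,x_n\rangle}{T_n}.
\]
To construct $u_{ij}$ for $i\le j$, the first step is to note that the inclusion of free algebras $\mathbb{K}\langle x_1,\ldots,x_i\rangle\hookrightarrow\mathbb{K}\langle x_1,\ldots,x_j\rangle$ sends $T_i$ into $T_j$: indeed $T_i=T(\mathcal{A})\cap\mathbb{K}\langle x_1,\ldots,x_i\rangle\subseteq T(\mathcal{A})\cap\mathbb{K}\langle x_1,\ldots,x_j\rangle=T_j$. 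Hence this inclusion descends to a well-defined algebra homomorphism $u_{ij}\colon F_i(\mathcal{A})\to F_j(\mathcal{A})$ given by $u_{ij}(f+T_i)=f+T_j$. (Equivalently, one may simply apply the universal property of $F_i(\mathcal{A})$ to the map $x_\ell\mapsto x_\ell$, $1\le\ell\le i$, into $F_j(\mathcal{A})\in\mathfrak{V}$; the resulting uniqueness will be convenient for the last part.)

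The second step is injectivity, which is the only point requiring any argument. Suppose $u_{ij}(f+T_i)=0$ for some $f\in\mathbb{K}\langle x_1,\ldots,x_i\rangle$; this means $f\in T_j$. Since $f$ already lies in $\mathbb{K}\langle x_1,\ldots,x_i\rangle$, intersecting shows
\[
f\in T_j\cap\mathbb{K}\langle x_1,\ldots,x_i\rangle=T(\mathcal{A})\cap\mathbb{K}\langle x_1,\ldots,x_i\rangle=T_i,
\]
so $f+T_i=0$ and $u_{ij}$ has trivial kernel. I would remark that this is precisely the (here elementary) statement that an identity of $\mathcal{A}$ in the variables $x_1,\ldots,x_i$ is the same object whether read inside the free algebra on $i$ or on $j$ variables; it is the analogue, for $T(\mathcal{A})$ itself, of the restriction property recorded in Lemma~\ref{lem}, and the latter may be cited in its place if one prefers to define $u_{ij}$ via the universal property.

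Finally, for the compatibility $u_{ik}=u_{jk}u_{ij}$ when $i\le j\le k$, I would observe that on cosets both composites act as $f+T_i\mapsto f+T_k$ for every $f\in\mathbb{K}\langle x_1,\ldots,x_i\rangle$, so they coincide; alternatively, both are homomorphisms $F_i(\mathcal{A})\to F_k(\mathcal{A})$ fixing each generator $x_\ell$, and uniqueness in the universal property forces equality. I expect no genuine obstacle here: once the quotient model is in place, existence and functoriality are formal, and injectivity reduces to the trivial intersection identity above.
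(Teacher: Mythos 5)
Your proof is correct and takes essentially the same route as the paper: the map sends the generators of $F_i(\mathcal{A})$ to the first $i$ generators of $F_j(\mathcal{A})$, and injectivity reduces to the observation that a polynomial in $x_1,\ldots,x_i$ which becomes an identity (i.e.\ zero) in $F_j(\mathcal{A})$ already lies in $T(\mathcal{A})\cap\mathbb{K}\langle x_1,\ldots,x_i\rangle$ and hence is zero in $F_i(\mathcal{A})$. Your coset formulation $u_{ij}(f+T_i)=f+T_j$ is just the explicit quotient-model version of what the paper phrases via the universal property, and if anything it spells out the injectivity step more carefully than the paper's one-line argument.
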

\begin{proof}
	Since $F_i(\mathcal{A})$ is free in $\mathrm{var}(\mathcal{A})$, a homomorphism from $F_i(\mathcal{A})$ to an algebra in this variety is defined by a choice of images of the free generators of $F_i(\mathcal{A})$. So we can let $u_{ij}$ send the free generators $\xi_1,\ldots,\xi_i$ of $F_i(\mathcal{A})$ to the first $i$ free generators of $F_j(\mathcal{A})$. If the image of some element is zero in $F_j(\mathcal{A})$, then it is a polynomial identity of $F_i(\mathcal{A})$, so it will be zero in $F_i(\mathcal{A})$. Thus, this map is injective. The last assertion is immediate from the construction of the $u_{ij}$.
\end{proof}

The last lemma says that the pair $\left((\mathcal{A}_i)_{i\in\mathbb{N}},(u_{ij})_{i\le j}\right)$ is a direct system.

\begin{proposition}
	$F(\mathcal{A})=\lim\limits_{\longrightarrow} F_i(\mathcal{A})$.
\end{proposition}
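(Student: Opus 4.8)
The plan is to exploit the universal property of the direct limit together with the freeness of $F(\mathcal{A})$. First I would produce, for each $i$, a canonical monomorphism $u_i:F_i(\mathcal{A})\to F(\mathcal{A})$ by sending the free generators $\xi_1,\ldots,\xi_i$ of $F_i(\mathcal{A})$ to the first $i$ free generators of $F(\mathcal{A})$; such a map exists because $F_i(\mathcal{A})$ is free on $i$ generators in $\mathrm{var}(\mathcal{A})$ and $F(\mathcal{A})\in\mathrm{var}(\mathcal{A})$. Exactly as in the previous lemma, $u_i$ is injective: if the image of a class $\bar f$, with $f=f(x_1,\ldots,x_i)$, vanishes in $F(\mathcal{A})$, then $f\in T(\mathcal{A})$, whence $f\in T(\mathcal{A})\cap\mathbb{K}\langle x_1,\ldots,x_i\rangle$, and therefore $\bar f=0$ already in $F_i(\mathcal{A})$ by Lemma \ref{lem}. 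By construction the $u_i$ are compatible with the connecting maps, that is, $u_j u_{ij}=u_i$ whenever $i\le j$, so $\bigl(F(\mathcal{A}),(u_i)_{i\in\mathbb{N}}\bigr)$ is a cocone over the direct system.

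Next, the universal property of $\lim\limits_{\longrightarrow}F_i(\mathcal{A})$ yields a unique algebra homomorphism $\psi:\lim\limits_{\longrightarrow}F_i(\mathcal{A})\to F(\mathcal{A})$ compatible with the canonical maps $\mu_i:F_i(\mathcal{A})\to\lim\limits_{\longrightarrow}F_i(\mathcal{A})$, namely $\psi\mu_i=u_i$ for all $i$. It then remains to check that $\psi$ is an isomorphism. For surjectivity I would use that every element of $F(\mathcal{A})$ is the class of a polynomial in finitely many variables, say $f(x_1,\ldots,x_n)$; such an element equals $u_n(\bar f)=\psi(\mu_n(\bar f))$, so it lies in the image of $\psi$.

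For injectivity I would invoke the concrete description of a filtered colimit of algebras: every element of $\lim\limits_{\longrightarrow}F_i(\mathcal{A})$ is of the form $\mu_i(a)$ for some $i$ and some $a\in F_i(\mathcal{A})$. If $\psi(\mu_i(a))=0$, then $u_i(a)=0$, and since $u_i$ is injective we obtain $a=0$, hence $\mu_i(a)=0$. Thus $\psi$ is injective, and combined with surjectivity it is the desired isomorphism. The only point requiring genuine care is the injectivity of the maps $u_i$, and this is precisely where Lemma \ref{lem} enters; everything else is the routine bookkeeping of a direct (filtered) colimit. I expect no serious obstacle beyond setting up the connecting maps $u_{ij}$ and the cocone maps $u_i$ consistently so that the relation $u_j u_{ij}=u_i$ holds on the nose.
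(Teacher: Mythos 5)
Your argument is correct, but it takes a genuinely different route from the paper's. The paper verifies the universal property of the direct limit directly at $F(\mathcal{A})$: after setting up the same cocone maps $u_i$, it takes an arbitrary compatible family of homomorphisms $\phi_i:F_i(\mathcal{A})\to\mathcal{B}$ and uses the freeness of $F(\mathcal{A})$ to define the mediating map $u:F(\mathcal{A})\to\mathcal{B}$ on the free generators by $u(\xi_i):=\phi_i(\xi_i)$; the relations $uu_i=\phi_i$ then follow from the compatibility $\phi_ju_{ij}=\phi_i$. In particular, the paper's proof never uses injectivity of the maps $u_i$, nor any concrete model of the colimit --- it exhibits $F(\mathcal{A})$ itself as the colimit, so it need not assume the colimit exists beforehand. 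You instead presuppose the filtered colimit $\lim\limits_{\longrightarrow}F_i(\mathcal{A})$ as an independently constructed object, obtain the canonical comparison map $\psi$ out of it, and prove $\psi$ bijective; for this you need two extra ingredients that the paper's proof avoids: the concrete description of a filtered colimit of algebras (every element is of the form $\mu_i(a)$ for some finite stage $i$), and the injectivity of the $u_i$, which you correctly ground in Lemma \ref{lem}. Your route is slightly longer but yields a little more along the way, namely that the canonical maps $\mu_i$ into the colimit are themselves injective (since $\psi\mu_i=u_i$ is), so the finite-rank relatively free algebras embed compatibly into the limit; the paper's route is shorter, purely universal-property-driven, and independent of how colimits of algebras are constructed.
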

\begin{proof}
	For each $i$, let $u_i:F_i(\mathcal{A})\to F(\mathcal{A})$ be the map sending the free generators of $F_i(\mathcal{A})$ to the first $i$ free generators of $F(\mathcal{A})$. Clearly $u_ju_{ij}=u_i$, for all $i\le j$.
	
	Now, consider a target $(\mathcal{B},(\phi_i)_{i\in\mathbb{N}})$, that is, an algebra $\mathcal{B}$ together with homomorphisms $\phi_i:F_i(\mathcal{A})\to\mathcal{B}$ such that $\phi_ju_{ij}=\phi_i$. We define $u:F(\mathcal{A})\to\mathcal{B}$ in the free generators $\xi_i$ via $u(\xi_i):=\phi_i(\xi_i)$ (the same image of $\phi_i$ applied in the last generator of $F_i(\mathcal{A})$). So clearly $uu_i=\phi_i$, for each $i\in\mathbb{N}$. This proves that $\lim\limits_{\longrightarrow} F_i(\mathcal{A})=F(\mathcal{A})$.
\end{proof}

\begin{corollary}
	For any $\mathcal{B}\in\mathrm{var}(\mathcal{A})$, one has
	$$
	\mathrm{Hom}(F(\mathcal{A}),\mathcal{B})=\lim\limits_{\longleftarrow}\mathrm{Hom}(F_i(\mathcal{A}),\mathcal{B}).
	$$\qed
\end{corollary}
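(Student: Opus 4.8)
The plan is to recognize this as the standard fact that the contravariant functor $\mathrm{Hom}(-,\mathcal{B})$ converts the direct limit $F(\mathcal{A})=\lim\limits_{\longrightarrow} F_i(\mathcal{A})$ from the previous proposition into an inverse limit. Concretely, I would first make the right-hand side explicit. The relevant inverse system is $\left(\mathrm{Hom}(F_i(\mathcal{A}),\mathcal{B})\right)_{i\in\mathbb{N}}$ with transition maps $u_{ij}^{*}\colon \mathrm{Hom}(F_j(\mathcal{A}),\mathcal{B})\to\mathrm{Hom}(F_i(\mathcal{A}),\mathcal{B})$ given by precomposition, $\psi\mapsto\psi\circ u_{ij}$, for $i\le j$. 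Thus
$$\lim\limits_{\longleftarrow}\mathrm{Hom}(F_i(\mathcal{A}),\mathcal{B})=\{(\psi_i)_{i\in\mathbb{N}}\ :\ \psi_i\in\mathrm{Hom}(F_i(\mathcal{A}),\mathcal{B}),\ \psi_j\circ u_{ij}=\psi_i\ \text{for all } i\le j\},$$
and I would observe that such a compatible family $(\psi_i)$ is precisely a \emph{target} $(\mathcal{B},(\psi_i)_{i\in\mathbb{N}})$ in the sense used in the proof of the previous proposition.

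Next, I would define the natural comparison map $\Theta\colon \mathrm{Hom}(F(\mathcal{A}),\mathcal{B})\to \lim\limits_{\longleftarrow}\mathrm{Hom}(F_i(\mathcal{A}),\mathcal{B})$ by $\Theta(u)=(u\circ u_i)_{i\in\mathbb{N}}$, where the $u_i\colon F_i(\mathcal{A})\to F(\mathcal{A})$ are the structural maps from the previous proposition. This is well defined: since $u_j u_{ij}=u_i$, one gets $(u u_j)\circ u_{ij}=u\circ(u_j u_{ij})=u u_i$, so $\Theta(u)$ is indeed a compatible family lying in the inverse limit.

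Then I would invoke the universal property of the direct limit, already established in the previous proposition, to conclude that $\Theta$ is a bijection. Indeed, given any compatible family $(\psi_i)$, viewed as a target, that proposition produces a \emph{unique} homomorphism $u\colon F(\mathcal{A})\to\mathcal{B}$ with $u u_i=\psi_i$ for all $i$; existence of such $u$ gives surjectivity of $\Theta$, and its uniqueness gives injectivity. I would also note that $\Theta$ respects the $\mathbb{K}$-linear structure (precomposition being $\mathbb{K}$-linear), so the identification is in fact one of $\mathbb{K}$-vector spaces, although only the set-level bijection is needed for the stated equality.

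The main, and essentially only, obstacle is purely a matter of bookkeeping: correctly identifying the transition maps of the inverse system as precomposition by the $u_{ij}$, and matching the notion of a "target" from the previous proof with a point of the inverse limit. Once these identifications are made, the statement is immediate from the universal property just proved, and no further computation is required.
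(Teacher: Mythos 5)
Your proposal is correct and is exactly the argument the paper intends: the corollary is stated with no written proof precisely because it follows immediately from the preceding proposition $F(\mathcal{A})=\lim\limits_{\longrightarrow}F_i(\mathcal{A})$, by the standard fact that $\mathrm{Hom}(-,\mathcal{B})$ converts a direct limit into an inverse limit. Your identification of compatible families with the paper's ``targets'' and your use of the existence/uniqueness in the universal property to get surjectivity/injectivity of the comparison map is the intended bookkeeping, spelled out in full.
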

The former corollary has an intuitive (and somewhat obvious) interpretation. Let $f\in F(\mathcal{A})$. It is known that the following three assertions are equivalent:
\begin{enumerate}
	\renewcommand{\labelenumi}{(\roman{enumi})}
	\item $f$ is a polynomial identity of $\mathcal{B}$,
	\item $f\in\mathrm{Ker}\,\Psi$, for all $\Psi\in\mathrm{Hom}(F(\mathcal{A}),\mathcal{B})$.
	\item $f\in\mathrm{Ker}\,\Psi$, for all $\Psi\in\mathrm{Hom}(F_j(\mathcal{A}),\mathcal{B})$, for a sufficiently large $j$ (indeed, it is enough to take a $j$ greater or equal to the number of variables of $f$).
\end{enumerate}
The last corollary states the equivalence between (ii) and (iii).

\section{Polynomial identities for $UT_2(G^{(k)})$\label{sec4}}
In this section, we investigate the polynomial identities of $UT_2(G^{(k)})$. We find a explicit set of polynomials that, together with some class of polynomials, generate the T-ideal of polynomial identities of $UT_2(G^{(k)})$.

One may notice that if $A$ and $B$ are algebras such that $R=\begin{pmatrix} A & M \\ 0 & B \end{pmatrix}$ is an algebra, then $T(A)T(B)\subseteq T(R)$. Verifying if the above inclusion is an equality is a more difficult task. In some cases, the approach of Lewin's Theorem applies (see \cite{Lewin} or \cite[Corollary 1.8.2]{GZ} for a more suitable version). Some results in this direction are given in the paper \cite{CdM}, where the authors describe conditions under which the T-ideal of a block-triangular matrix algebra over an algebra $A$ factors as the product of the ideals of the blocks. But one can see that the algebras $G^{(k)}$ do not satisfy the necessary hypothesis to that, namely the existence of a partially multiplicative basis for its relatively free algebras, so in this paper we try a different approach.

We let $\mathbb{K}$ be a field of characteristic zero.
\begin{lemma}\label{lem1}
	For any $t\in\mathbb{N}$, the following are consequences of $[x_1,x_2,x_3][x_4,x_5,x_6]=0$:
	\begin{enumerate}
		\item $[y_1,y_2,y_3]p(x_1,\ldots,x_t)[z_1,z_2,z_3]=0$,
		\item $([y_1,y_2][y_3,y_4]+[y_1,y_3][y_2,y_4])p(x_1,\ldots,x_t)[z_1,z_2,z_3]=0$,
		\item $[y_1,y_2,y_3]p(x_1,\ldots,x_t)([z_1,z_2][z_3,z_4]+[z_1,z_3][z_2,z_4])=0$,
		\item $([y_1,y_2][y_3,y_4]+[y_1,y_3][y_2,y_4])p(x_1,\ldots,x_t)([z_1,z_2][z_3,z_4]+[z_1,z_3][z_2,z_4])=0$.
	\end{enumerate}
	where $p(x_1,\ldots,x_t)$ is any multilinear polynomial.
\end{lemma}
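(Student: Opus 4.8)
The plan is to prove the four identities in a cascade: derive (1) directly from the hypothesis $[x_1,x_2,x_3][x_4,x_5,x_6]=0$, then obtain (2) and (3) from (1), and finally (4) from (3). The single mechanism throughout is to substitute a \emph{product} of variables into one slot of a triple commutator occurring in an identity already known to be a consequence of the hypothesis, and then to expand using the Leibniz rule for the inner derivation $[w,-]$ together with the bracket product rule $[uv,w]=u[v,w]+[u,w]v$. The expansion produces the desired target term plus several error terms, each of which, after the expansion, still carries a genuine triple commutator adjacent to another triple commutator, and hence lies in the $T$-ideal generated by the hypothesis.

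For (1), I first reduce by multilinearity and relabelling to the case $p=x_1\cdots x_t$ a single monomial, and induct on $t$; the base case $t=0$ is exactly the hypothesis. For the inductive step I substitute the element $y_3x_1\cdots x_t$ into the third slot of the first factor of $[y_1,y_2,y_3][z_1,z_2,z_3]=0$. Writing $w=[y_1,y_2]$ and expanding $[w,\,y_3x_1\cdots x_t]$ by the Leibniz rule gives
\[
[y_1,y_2,y_3]\,x_1\cdots x_t\,[z_1,z_2,z_3]+\sum_{i=1}^{t}y_3x_1\cdots x_{i-1}\,[y_1,y_2,x_i]\,x_{i+1}\cdots x_t\,[z_1,z_2,z_3]=0.
\]
The first summand is the desired left-hand side of (1), while every term of the sum has the shape $[y_1,y_2,x_i]\cdot q\cdot[z_1,z_2,z_3]$ preceded by a prefactor, with $q$ a monomial of degree strictly less than $t$; by the induction hypothesis each such term lies in the ideal, which forces (1).

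To obtain (2), I substitute a product $y_1\mapsto y_1y_1'$ into the \emph{first} slot of the left triple commutator in the already-proven identity (1). Expanding $[y_1y_1',y_2,y_3]$ with the product rule yields $y_1[y_1',y_2,y_3]+[y_1,y_2,y_3]y_1'$ together with precisely the two mixed terms $[y_1,y_3][y_1',y_2]+[y_1,y_2][y_1',y_3]$. After multiplying by $p[z_1,z_2,z_3]$ and invoking (1), the two terms containing a triple commutator vanish, and the relabelling $y_1'=y_4$ identifies the surviving combination with $\bigl([y_1,y_2][y_3,y_4]+[y_1,y_3][y_2,y_4]\bigr)p[z_1,z_2,z_3]$ up to an overall sign; this is (2). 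Statement (3) is the mirror image: the same substitution is made in the first slot of the \emph{right} triple commutator of (1), and identical bookkeeping produces the symmetric combination on the right. Finally, (4) follows by running the argument for (2) once more, but starting from (3) in place of (1): substituting $y_1\mapsto y_1y_1'$ in the left triple commutator of (3) converts it into $[y_1,y_2][y_3,y_4]+[y_1,y_3][y_2,y_4]$, while all error terms now vanish by (3).

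The main obstacle is the base identity (1), and specifically the danger of circularity: naively sliding $p$ past either triple commutator and reassembling is tautological and yields no information. The decisive idea is to break this symmetry by feeding the product $y_3x_1\cdots x_t$ into a \emph{single} slot of the hypothesis and letting the Leibniz expansion separate the ``pure'' target term from error terms that are manifestly products of two triple commutators. Once (1) is established, the remaining statements are routine applications of the same substitution device, the only care being the correct sign and variable matching when recognizing the symmetric combinations $[y_1,y_2][y_3,y_4]+[y_1,y_3][y_2,y_4]$.
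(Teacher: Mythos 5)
Your proof is correct, and its overall cascade (prove (1), then obtain (2)--(4) by substituting a product into one slot of an identity already known and discarding error terms that still carry a triple commutator) is exactly the structure of the paper's proof; the differences lie in how the two individual steps are executed. For (1), the paper has a one-line argument:
$$
[y_1,y_2,y_3]\,p\,[z_1,z_2,z_3]=[[y_1,y_2,y_3],p][z_1,z_2,z_3]+p\,[y_1,y_2,y_3][z_1,z_2,z_3],
$$
and both summands are substitution instances of the hypothesis, because $[[y_1,y_2,y_3],p]=[[y_1,y_2],y_3,p]$ is itself a left-normed triple commutator. So the ``naive slide'' that you dismiss as tautological in your final paragraph is in fact not tautological: sliding $p$ past $[y_1,y_2,y_3]$ produces an error term that is manifestly of the form $[x_1,x_2,x_3]$ evaluated at $[y_1,y_2]$, $y_3$, $p$, and no induction on the length of $p$ is needed. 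Your induction via the Leibniz expansion of $[[y_1,y_2],y_3x_1\cdots x_t]$ is valid, just strictly more work, and your reduction to monomial $p$ by multilinearity is fine. For (2), the paper substitutes $y_2\mapsto y_2^2$ into (1), simplifies modulo (1) to $2[y_1,y_2][y_2,y_3]p[z_1,z_2,z_3]=0$ (using that $[[y_2,y_3],[y_1,y_2]]$ is again a triple commutator), and then linearizes, which uses characteristic zero; your substitution $y_1\mapsto y_1y_1'$ performs the partial linearization upfront, and your bookkeeping of the expansion $[y_1y_1',y_2,y_3]=y_1[y_1',y_2,y_3]+[y_1,y_3][y_1',y_2]+[y_1,y_2][y_1',y_3]+[y_1,y_2,y_3]y_1'$ and of the sign after relabelling $y_1'=y_4$ is accurate, so you obtain the multilinear identity with coefficients $\pm1$ and need no division by $2$. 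Items (3) and (4) are handled symmetrically in both treatments. In sum: your route buys characteristic-freeness in (2)--(4) at the cost of an unnecessary induction in (1), whereas the paper's route is shorter but leans on $\operatorname{char}\mathbb{K}=0$, which is a standing hypothesis there anyway.
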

\begin{proof}
	The first one follows from
	\begin{align*}
	[y_1,y_2,y_3]p(x_1,\ldots,x_t)[z_1,z_2,z_3]=&[[y_1,y_2,y_3],p(x_1,\ldots,x_t)][z_1,z_2,z_3]\\%
	&+p(x_1,\ldots,x_t)[y_1,y_2,y_3][z_1,z_2,z_3].
	\end{align*}
	Working modulo the identity $[x_1,x_2,x_3][x_4,x_5,x_6]$, for the second one we have
	\begin{align*}
	0&=[y_1,y_2^2,y_3]p(x_1,\ldots,x_t)[z_1,z_2,z_3]=([[y_1,y_2]y_2,y_3]+[y_2[y_1,y_2],y_3])p(x_1,\ldots,x_t)[z_1,z_2,z_3]\\%
	&=([y_1,y_2][y_2,y_3]+[y_1,y_2,y_3]y_2+[y_2,y_3][y_1,y_2]+y_2[y_1,y_2,y_3])p(x_1,\ldots,x_t)[z_1,z_2,z_3]\\%
	&=([y_1,y_2][y_2,y_3]+[y_1,y_2][y_2,y_3]+[[y_2,y_3],[y_1,y_2]])p(x_1,\ldots,x_t)[z_1,z_2,z_3]\\%
	&=2[y_1,y_2][y_2,y_3]p(x_1,\ldots,x_t)[z_1,z_2,z_3].
	\end{align*}
	Linearizing the above identity, we obtain (2). Analogously we obtain (3) and (4).
\end{proof}

We fix $m\in\mathbb{N}$.
\begin{lemma}\label{lem2}
	The polynomials
	\begin{enumerate}
		\item $[x_1,x_2,x_3][x_4,x_5,x_6]=0$,
		\item $[x_1,x_2]\ldots[x_{2m+3},x_{2m+4}]=0$,
	\end{enumerate}
	are polynomial identities for $UT_2(G^{(2m)})$ and $UT_2(G^{(2m+1)})$.
\end{lemma}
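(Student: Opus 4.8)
### Proof Strategy

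The plan is to verify both identities directly by evaluating arbitrary elements of $UT_2(G^{(N)})$, where $N \in \{2m, 2m+1\}$, exploiting the structure of the Grassmann algebra $G^{(N)}$ generated by $N$ anticommuting generators. Write a generic element of $UT_2(G^{(N)})$ as $a = \begin{pmatrix} \alpha & \beta \\ 0 & \gamma \end{pmatrix}$ with $\alpha, \beta, \gamma \in G^{(N)}$, and decompose each entry into its even (central) and odd parts. The key observation for the first identity is that for any two elements $a, b \in UT_2(G^{(N)})$, the commutator $[a,b]$ is \emph{strictly upper triangular modulo nilpotents} in a controlled way: the diagonal entries of $[a,b]$ are commutators $[\alpha,\alpha']$ and $[\gamma,\gamma']$ in $G^{(N)}$, which lie in the square of the augmentation ideal (the span of products of at least two Grassmann generators). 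Iterating, the second-order commutator $[a,b,c]$ has diagonal entries lying even deeper in the nilpotent radical.

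First I would establish the identity (1), $[x_1,x_2,x_3][x_4,x_5,x_6]=0$, by a careful entry-wise computation. The crucial point is that $UT_2(G)$ satisfies $[x_1,x_2,x_3][x_4,x_5,x_6]=0$ (this is a classical identity of $UT_2(G)$, and since $G^{(N)}$ is a homomorphic image / subalgebra-style specialization of $G$, the identity is inherited). More precisely, since $G^{(N)} \in \mathfrak{G} = \mathrm{var}(G)$, every identity of $UT_2(G)$ is an identity of $UT_2(G^{(N)})$; because $[x_1,x_2,x_3][x_4,x_5,x_6]$ is known to be an identity of $UT_2(G)$, item (1) follows immediately with no dimension count. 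This is the clean way to dispatch (1) without recomputing anything.

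The substantive part is identity (2), the vanishing of a product of $m+2$ commutators, where the specific length $2m+3, 2m+4$ (i.e.\ $m+2$ commutator factors) is tuned to the dimension $N \in \{2m, 2m+1\}$ of the Grassmann algebra. Here I would argue as follows: evaluate $[x_1,x_2]\cdots[x_{2m+3},x_{2m+4}]$ on arbitrary matrices $a_1, \ldots, a_{2m+4} \in UT_2(G^{(N)})$. Each commutator $[a_{2i-1},a_{2i}]$ is an upper triangular matrix whose diagonal entries are Grassmann commutators $[\alpha,\alpha']$, $[\gamma,\gamma']$, and whose off-diagonal $(1,2)$ entry is some element of $G^{(N)}$. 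The diagonal Grassmann commutators lie in the ideal $J^2$, where $J$ is the augmentation ideal of $G^{(N)}$ spanned by nonscalar elements; indeed $[\alpha,\alpha']$ is a sum of products involving at least two odd generators. When one multiplies $m+2$ such commutator-matrices together, the product of the diagonal parts accumulates a total Grassmann degree of at least $2(m+2) = 2m+4 > N$, forcing the diagonal of the product to vanish (since any product of more than $N$ generators is zero in $G^{(N)}$). The remaining contributions to the product land in the $(1,2)$ position, and I would track that these too must accumulate enough Grassmann degree — each factor contributes at least one commutator to the diagonal or an off-diagonal term, and the combinatorics of which factor supplies the off-diagonal entry shows that the surviving term still forces a product of at least $2m+1 \ge N$ (respectively $2m+2 > N$) anticommuting generators.

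The main obstacle will be the bookkeeping for the $(1,2)$-entry of the product, since exactly one of the $m+2$ factors may contribute its off-diagonal entry while the others contribute diagonal entries; I expect this to be the delicate count, and the parity split between $N = 2m$ and $N = 2m+1$ is precisely what makes a single bound $m+2$ work for both. A clean way to organize this is to note that the diagonal entries all live in $J^2$ while off-diagonal contributions require at least the remaining $m+1$ factors to supply diagonal (hence $J^2$) pieces, giving total degree at least $2(m+1) = 2m+2$, which exceeds $N$ in the odd case and, combined with the single odd-parity off-diagonal Grassmann element, reaches the bound in the even case as well. Once both (1) and (2) are checked on arbitrary elements, the lemma is proved, since polynomial identities need only be verified on a spanning set of substitutions, and these are multilinear.
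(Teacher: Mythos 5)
Your proposal is correct, but it reaches the result by a more direct route than the paper. For item (1) you and the paper argue identically: $UT_2(G^{(N)})$ sits inside $UT_2(G)$, so it inherits the known identity $[x_1,x_2,x_3][x_4,x_5,x_6]$. For item (2), the paper first reduces to the even case $N=2m$ by quoting $T(G^{(2m)})=T(G^{(2m+1)})$ together with Lemma 10 of \cite{CdM} (which yields $T(UT_2(G^{(2m)}))=T(UT_2(G^{(2m+1)}))$), and then exploits multilinearity to evaluate only on matrix units scaled by Grassmann elements, running a case analysis on how many variables hit multiples of $e_{12}$: two or more give zero outright, none gives a product of $m+2$ commutators inside the diagonal $G^{(2m)}\oplus G^{(2m)}$, and exactly one forces the shape $w_1g'w_2e_{12}$ where $w_1g'w_2$ contains at least $m+1$ Grassmann commutators, hence vanishes. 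You instead evaluate on arbitrary elements and do the bookkeeping entry-wise: each commutator matrix has diagonal entries $[\alpha,\alpha']=2\alpha_1\alpha_1'$ of Grassmann degree at least $2$, so the diagonal of the product of $m+2$ commutators has degree at least $2(m+2)$, while every term in the expansion of the $(1,2)$-entry contains at least $m+1$ diagonal factors and so has degree at least $2(m+1)=2m+2>N$ for both $N=2m$ and $N=2m+1$. This buys you two things the paper's proof does not have: you never need the cited equality of T-ideals (both parities are handled uniformly by the single bound $2m+2$), and you never need the reduction to matrix-unit substitutions; the paper's version, in exchange, is shorter to write down once multilinearity is invoked. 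One small correction: your closing sentence suggests the even case $N=2m$ needs an extra degree contribution from the off-diagonal Grassmann element to reach the bound; it does not, since $2m+2>2m$ already, so the parity discussion in your sketch can be deleted entirely rather than repaired.
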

\begin{proof}
	We know that $T(G^{(2m)})=T(G^{(2m+1)})$. So, by \cite[Lemma 10]{CdM}, we have $T(UT_2(G^{(2m)}))=T(UT_2(G^{(2m+1)}))$. Thus, we only need to check the statement for $UT_2(G^{(2m)})$. It is well-known that $[x_1,x_2,x_3][x_4,x_5,x_6]$ is a polynomial identity for $UT_2(G)$, hence, so is for $UT_2(G^{(2m)})$ as well.
	
	Now, consider the polynomial $q$ of (2). Since $q$ is multilinear, we only need to check evaluations of $q$ on matrix units multiplied by elements of $G^{(2m)}$. An evaluation will be automatically zero if two or more variables are substituted by a multiple of $e_{12}$. If all variables assume diagonal values, then we obtain zero again, since the diagonal of $UT_2(G^{(2m)})$ is $G^{(2m)}\oplus G^{(2m)}$.
	
	So, assume that $x_i=ge_{12}$, for some $g$, and let $x_j$ be the variable appearing together with $x_i$. So $[x_i,x_j]=g'e_{12}$. Next, the variables that appear before $[x_i,x_j]$ must be evaluated on some multiple of $e_{11}$, and the variables after $[x_i,x_j]$ must be evaluated on a multiple of $e_{22}$; otherwise we certainly obtain zero. So we have that $q=w_1g'w_2e_{12}$, where $w_1g'w_2$ is a product of elements of $G^{(2m)}$, containing at least $m+1$ commutators of elements of $G^{(2m)}$. So $w_1g'w_2=0$, and $q$ is a polynomial identity of $UT_2(G^{(2m)})$.
\end{proof}

Before we proceed, we recall the following classical result:
\begin{theorem}[Theorem 5.2.1(ii) of \cite{drenskybook}]\label{basis_utn}
	Let $\mathbb{K}$ be any infinite field, and $n\in\mathbb{N}$. The relatively free algebra of the variety generated by the identity
	$$
	[x_1,x_2]\cdots[x_{2n-1},x_{2n}]=0
	$$
	has a basis consisting of all products
	$$
	x_1^{a_1}\ldots x_m^{a_m}[x_{i_{11}},x_{i_{21}},\ldots,x_{i_{p_1 {1}}}]\ldots[x_{i_{1r}},x_{i_{2r}},\ldots,x_{i_{p_rr}}],
	$$
	where the number $r$ of participating commutators is $\le n-1$ and the indices in each commutator $[x_{i_{1s}},x_{i_{2s}},\ldots,x_{i_{p_ss}}]$ satisfy $i_{1s}>i_{2s}\le\cdots\le i_{p_ss}$.\qed
\end{theorem}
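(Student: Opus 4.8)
The plan is to deduce the statement from the Poincar\'e--Birkhoff--Witt (PBW) theorem via the identification of $\mathbb{K}\langle X\rangle$ with the universal enveloping algebra $U(L)$ of the free Lie algebra $L=L(X)$, the Lie bracket being the associative commutator; the single defining identity $w=[x_1,x_2]\cdots[x_{2n-1},x_{2n}]$ will enter only at the end, to truncate the number of commutator factors. The first step is to fix a convenient homogeneous basis of $L$. I claim that for each degree $p\ge2$ the left-normed commutators $[x_{i_1},\ldots,x_{i_p}]$ with $i_1>i_2\le i_3\le\cdots\le i_p$ form a basis of the degree-$p$ component of $L$. Spanning follows from a straightening argument: antisymmetry $[a,b]=-[b,a]$ produces $i_1>i_2$, while Jacobi in the form $[a,b,c]=[a,c,b]+[a,[b,c]]$ sorts the entries from the third slot on into nondecreasing order, the correction terms being left-normed commutators of the same degree to which one recurses. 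Linear independence is then obtained by matching cardinalities against the Witt dimension $\tfrac1p\sum_{d\mid p}\mu(d)\,m^{p/d}$ in each degree; this is precisely the basis recorded in \cite{drenskybook}.

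Next I fix a total order on this homogeneous basis of $L$ that places the degree-one elements $x_1<\cdots<x_m$ before every commutator. By PBW, the ordered products of basis elements of $L$ form a basis of $U(L)=\mathbb{K}\langle X\rangle$. Collecting the degree-one factors into a commutative monomial $x_1^{a_1}\cdots x_m^{a_m}$ and keeping the commutators as an ordered product yields exactly the products displayed in the statement (the order among the commutators, implicit in the statement, is what secures linear independence). Hence these products already form a basis of the \emph{free} associative algebra.

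Finally I identify the relevant $T$-ideal. Let $W$ be the span of all these PBW products having at least $n$ commutator factors. Using $[x,bc]=[x,b]c+b[x,c]$ and the collecting process, moving a variable past a product of commutators replaces each factor $c_i$ by itself plus $[x,c_i]$, never lowering the number of commutators; thus $W$ is a two-sided ideal, and being spanned by products of commutators it is invariant under endomorphisms, so $W$ is a $T$-ideal. Since $w\in W$, the $T$-ideal generated by $w$ is contained in $W$; conversely every product of $\ge n$ left-normed commutators arises from $w$ by substituting polynomials for the $x_i$ (replace $x_{2s-1}$ by a left-normed commutator and $x_{2s}$ by a single variable), and multiplying by a monomial keeps us inside this $T$-ideal, so $W$ is contained in it. Therefore the two coincide, and $\mathbb{K}\langle X\rangle/W$ has as basis the images of the surviving PBW products, namely those with at most $n-1$ commutators --- exactly the asserted basis.

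I expect the main obstacle to be the straightening lemma of the first paragraph: ensuring the Jacobi-driven sorting of commutator entries terminates, and confirming via the Witt formula that the normal-form commutators are linearly independent. Once the Lie basis and PBW are in hand, the identification of the $T$-ideal with $W$ is comparatively routine, because PBW guarantees that products with fewer than $n$ and with at least $n$ commutators are jointly linearly independent, so no nonzero combination of the retained basis elements can fall into $W$.
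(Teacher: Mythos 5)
Your argument collapses at its foundation: the left-normed commutators $[x_{i_1},\ldots,x_{i_p}]$ with $i_1>i_2\le i_3\le\cdots\le i_p$ do \emph{not} form a basis of the degree-$p$ homogeneous component of the free Lie algebra once $p\ge 4$, and the Witt-formula check you propose actually refutes the claim rather than confirming it. For $m=3$ generators and $p=4$ there are $15$ restricted commutators but the Witt dimension is $\tfrac14(3^4-3^2)=18$; in the multilinear setting, the degree-$4$ multilinear component of $L(x_1,\ldots,x_4)$ has dimension $3!=6$, while the only restricted multilinear commutators are the three elements $[x_i,x_1,x_j,x_k]$ with $i\in\{2,3,4\}$, $j<k$. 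The straightening fails exactly where you flagged your worry: when sorting slot $3$ onward of a commutator whose left segment $w$ already has degree $\ge2$, the Jacobi correction is $[w,[a,b]]$, which is not left-normed, and rewriting it as $[w,a,b]-[w,b,a]$ merely undoes the step; the recursion never closes inside the restricted set. The elements genuinely missing from the restricted span are commutators of commutators such as $[[x_2,x_1],[x_4,x_3]]$, and in the \emph{associative} algebra these are differences of \emph{products} of two commutators, $w[a,b]-[a,b]w$. This is the crux of the real theorem: straightening is valid only modulo products with \emph{more} commutator factors, not within the Lie algebra.

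This also explains why your PBW step is misaligned with the statement being proved. A PBW basis forces the commutator factors to appear in nondecreasing order with respect to the chosen order on the Lie basis, whereas the theorem's basis imposes no order on the factors: both $[x_2,x_1][x_4,x_3]$ and $[x_4,x_3][x_2,x_1]$ are basis elements, and these extra unordered products are exactly what compensates for the missing restricted commutators. Concretely, the multilinear proper polynomials of degree $4$ have dimension $9$, realized as $6+3$ (Lie basis plus ordered products) or as $3+6$ (restricted commutators plus unordered products); your hybrid reading --- restricted commutators with PBW-ordered factors --- yields only $3+3=6$ elements, so it is not even a spanning set of the free algebra. The standard proof (Drensky, Theorem 5.2.1, which the paper cites without reproving) therefore works directly in the associative algebra: spanning is obtained by downward induction on the number of commutator factors (the Jacobi corrections raise that number, the induction terminates because the number of factors is bounded by half the degree, and modulo the T-ideal all products of at least $n$ factors vanish), and linear independence is established not by counting but by exhibiting nonvanishing evaluations, e.g.\ on generic upper triangular matrices --- the same device used in Section 4 of this paper. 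Your final step identifying the T-ideal of $[x_1,x_2]\cdots[x_{2n-1},x_{2n}]$ with the span of products having at least $n$ commutator factors is essentially sound, but it presupposes the basis of the free algebra that your first two paragraphs fail to establish.
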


\begin{remark}
	Now, let $\mathcal{T}_m$ be the T-ideal generated by the identities of Lemma \ref{lem2}. We notice the following fact. Assume we have a multilinear polynomial of the following kind:
	$$
	[x_{i_1},x_{i_2}]\ldots[x_{2r-1},x_{2r}][y_1,\ldots,y_s][x_{j_1},x_{j_2}]\ldots[x_{j_{2t-1}},x_{j_{2t}}],
	$$
	where $s\ge3$. Then, using the identities of Lemma \ref{lem1}, modulo the polynomial $[x_1,x_2,x_3][x_4,x_5,x_6]$, we can order $i_1<\cdots<i_{2r}$, and $j_1<\cdots<j_{2t}$.
\end{remark}

Consider the following family of polynomials:
\begin{align}
\begin{split}\label{pol1}
[x_{i_1},x_{i_2}]\ldots[x_{i_{2r-1}},x_{i_{2r}}][x_{j_1},\ldots,x_{j_s}][x_{k_1},x_{k_2}]\ldots[x_{k_{2t-1}},x_{k_{2t}}],\\%
r\ge0, t\ge0, \quad r+t\leq m,\quad i_1<i_2<\cdots<i_{2r},\quad k_1<k_2<\cdots<k_{2t},\\%
s>2,\quad j_1>j_2<j_3<\cdots<j_s.
\end{split}
\end{align}

Also, for each $t\in\mathbb{N}$, let $\mathfrak{B}^{(t)}_m$ be a basis of
$$
\mathrm{Span}\{[x_{\sigma(1)},x_{\sigma(2)}]\cdots[x_{\sigma(2t-1)},x_{\sigma(2t)}]\mid\sigma\in\mathcal{S}_{2t}\}+T(UT_2(G^{(2m)}))/T(UT_2(G^{(2m)}).
$$
Denote $\mathfrak{B}_m=\bigcup_{t\in\mathbb{N}}\mathfrak{B}_m^{(t)}$. Let $\mathfrak{T}_m$ be the set of polynomial identities of $UT_2(E^{(2m)})$ given by a linear combination of product of commutators of length 2 (note that $\mathfrak{T}_m\ne0$, since it contains identity (d) of Lemma \ref{lem1}).

\begin{lemma}
	The polynomials \eqref{pol1} and $\mathfrak{B}_m$ generate the proper multilinear polynomials in $\mathbb{K}\langle X\rangle$ modulo $\mathcal{T}_m+\mathfrak{T}_m$.
\end{lemma}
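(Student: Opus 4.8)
The plan is to take an arbitrary proper multilinear polynomial $f\in\mathbb{K}\langle X\rangle$ and normalize it in stages, treating products of commutators according to how many factors of length $\ge3$ survive. Since $\mathcal{T}_m$ contains the long product $[x_1,x_2]\cdots[x_{2m+3},x_{2m+4}]$, I would first apply Theorem \ref{basis_utn} with $n=m+2$: modulo $\mathcal{T}_m$, the polynomial $f$ becomes a linear combination of products of left-normed commutators, each in the normal form $i_{1s}>i_{2s}<\cdots<i_{p_ss}$ (strict inequalities, since $f$ is multilinear), with at most $m+1$ commutators per product and with no leading-variable prefix (as $f$ is proper). It therefore suffices to reduce a single such product $C_1\cdots C_\ell$, with $\ell\le m+1$, into the claimed form.

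Next I would use the other generator $[x_1,x_2,x_3][x_4,x_5,x_6]$ of $\mathcal{T}_m$ to discard every product containing two commutators of length $\ge3$. Expanding a left-normed commutator of length $p\ge3$ as $[a_1,\ldots,a_p]=\sum_i\pm\,u_i\,[a_1,a_2,a_3]\,v_i$, with $u_i,v_i$ words in $a_4,\ldots,a_p$, any product with two long commutators becomes a sum of terms of the shape $u\,[a_1,a_2,a_3]\,w\,[b_1,b_2,b_3]\,v$ with $w$ multilinear; each of these vanishes by Lemma \ref{lem1}(1), and as $\mathcal{T}_m$ is a two-sided ideal the whole product lies in $\mathcal{T}_m$. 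Hence modulo $\mathcal{T}_m$ every surviving product has at most one commutator of length $\ge3$.

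This leaves two cases. If all $C_i$ have length $2$, then $C_1\cdots C_\ell$ is a product of length-$2$ commutators. Since $\mathfrak{T}_m$ is, by definition, the set of all identities of $UT_2(G^{(2m)})$ that are linear combinations of products of length-$2$ commutators, and $\mathcal{T}_m\subseteq T(UT_2(G^{(2m)}))$ by Lemma \ref{lem2}, the relations that such products satisfy modulo $\mathcal{T}_m+\mathfrak{T}_m$ coincide with those they satisfy modulo $T(UT_2(G^{(2m)}))$; hence by the choice of $\mathfrak{B}_m$ the product lies in its span. If instead exactly one factor, say $C_p$, has length $s>2$, the product reads $(\text{length-}2\text{'s})\,C_p\,(\text{length-}2\text{'s})$ with $p-1$ factors before and $\ell-p$ after $C_p$. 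I would then invoke the remark preceding the statement to reorder, modulo $[x_1,x_2,x_3][x_4,x_5,x_6]$, the left block into $i_1<\cdots<i_{2r}$ and the right block into $k_1<\cdots<k_{2t}$; neither step touches $C_p$ nor leaves this shape, and with $r=p-1$, $t=\ell-p$ one gets $r+t+1=\ell\le m+1$, i.e. $r+t\le m$. Normalizing $C_p$ to $j_1>j_2<\cdots<j_s$ then produces exactly a polynomial of type \eqref{pol1}.

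The bulk of the work, and the step I expect to be the main obstacle, is the combination of the second stage with the reordering in the last case. One must verify that expanding long left-normed commutators really yields only the pattern $[\,\cdot,\cdot,\cdot\,](\text{mult})[\,\cdot,\cdot,\cdot\,]$ required by Lemma \ref{lem1}(1), and that the block-reorderings behind the remark never leave the family \eqref{pol1}; in particular that swapping two adjacent length-$2$ commutators only introduces a commutator of commutators, a factor of length $4$, which is annihilated against $C_p$ again by Lemma \ref{lem1}(1), while the inner transpositions are governed by parts (2) and (3) of that lemma. Granting these checks, every proper multilinear polynomial is a linear combination of \eqref{pol1} and $\mathfrak{B}_m$ modulo $\mathcal{T}_m+\mathfrak{T}_m$, which is the assertion.
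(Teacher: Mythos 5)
Your proposal is correct and takes essentially the same route as the paper's proof: reduce modulo the long product of commutators via Theorem \ref{basis_utn}, discard every product containing two commutators of length $\ge 3$ as a consequence of $[x_1,x_2,x_3][x_4,x_5,x_6]$ (Lemma \ref{lem1}(1)), reorder the outer blocks of length-$2$ commutators via the remark, and absorb pure products of length-$2$ commutators into the span of $\mathfrak{B}_m$ modulo $\mathfrak{T}_m$ by definition. The checks you flag at the end (expanding long left-normed commutators into terms $u[a_1,a_2,a_3]v$, and the fact that swapping adjacent length-$2$ commutators only creates triple-commutator error terms killed against the long factor) are precisely the details the paper leaves implicit, and they go through exactly as you describe.
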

\begin{proof}
	Since $[x_1,x_2]\ldots[x_{2m+3},x_{2m+4}]\in\mathcal{T}_m$, it is enough to write the elements of the relatively free algebra $F(UT_{m+2}(\mathbb{K}))$ as a linear combination of polynomials of kind \eqref{pol1}, and elements of $\mathfrak{B}_m$. From Theorem \ref{basis_utn}, and since $[x_1,x_2,x_3][x_4,x_5,x_6]=0$, it is enough to consider a polynomial $q$ of kind
	$$
	q=[x_{i_1},x_{i_2}]\ldots[x_{i_{2r-1}},x_{i_{2r}}][x_{j_1},\ldots,x_{j_s}][x_{k_1},x_{k_2}]\ldots[x_{k_{2t-1}},x_{k_{2t}}].
	$$
	If $s>2$, then from the remark above, we can order $i_1<\cdots<i_{2r}$, and $k_1<\cdots<k_{2t}$, and we are done. If $s=2$, then $q$ is a product of commutators of length 2. So, $q$ is a linear combination of elements $\mathfrak{B}_m$ modulo $\mathfrak{T}_m$, by definition.
\end{proof}

\begin{lemma}
	The family of polynomials given by \eqref{pol1} and $\mathfrak{B}_m$ are linearly independent modulo $T(UT_2(G^{(2m)}))$.
\end{lemma}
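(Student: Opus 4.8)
The plan is to reduce to multilinear polynomials (legitimate since $\mathrm{char}\,\mathbb{K}=0$) of a fixed degree $n$, and to read off the coefficients of a hypothetical vanishing combination $\sum_p\alpha_p\,p+\sum_q\beta_q\,q$ by evaluating on $UT_2(G^{(2m)})$. Throughout I would use the following structural facts about $G^{(2m)}$: every commutator is central and lies in the span of the even monomials of degree $\ge 2$; consequently a commutator of length $\ge 3$ vanishes, a product of more than $m$ length-$2$ commutators vanishes, and a product of exactly $m$ of them is a scalar multiple of the top form $e_1\cdots e_{2m}$. Substituting $x_i\mapsto X_i=a_ie_{11}+b_ie_{12}+c_ie_{22}$ with $a_i,b_i,c_i\in G^{(2m)}$, these facts show that the diagonal of a commutator of length $\ge 3$ is $0$, so every polynomial of type \eqref{pol1} takes values in the ideal $J=G^{(2m)}e_{12}$, its $(1,2)$-entry being $L_{11}\,f\,R_{22}$, where $L_{11}$ and $R_{22}$ are the products of the left and right length-$2$ commutators evaluated on the $a$'s and the $c$'s and $f$ is the $(1,2)$-entry of the long commutator; by contrast each $q\in\mathfrak{B}_m$ has diagonal $\bigl(\prod[a,a],\prod[c,c]\bigr)$.

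First I would exploit the $\mathbb{Z}_2$-grading $UT_2(G^{(2m)})=D\oplus J$, with $D$ the diagonal subalgebra. Since the combination is an identity, both its diagonal part and its $(1,2)$-entry vanish identically. The diagonal part only sees the family $\mathfrak{B}_m$ and gives $\sum_q\beta_q\,q_{\mathrm{diag}}=0$, while the $(1,2)$-part gives $\sum_p\alpha_p\,p_{12}+\sum_q\beta_q\,q_{12}=0$, an identity in the variables $a_i,b_i,c_i$ which I expand in the monomial basis of the relatively free algebra $R$ of $\mathrm{var}(G^{(2m)})$ furnished by Theorem \ref{basis_utn} (recall that in $R$ commutators are central and $[x_1,x_2,x_3]=0$).

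The core of the argument is the analysis of these $(1,2)$-entries. Each monomial occurring contains exactly one off-diagonal variable $b_v$: for a $q\in\mathfrak{B}_m$ it sits in the off-diagonal $\delta_\ell=a_ub_v+b_uc_v-a_vb_u-b_vc_u$ of a single length-$2$ commutator, whereas for a polynomial of type \eqref{pol1} it sits in the off-diagonal $f$ of a commutator of length $s\ge 3$, which carries additional central commutators coming from the diagonal entries of the remaining long-commutator variables. I would record, for each polynomial, the left/right position of every commutator relative to the opened block, the variable carrying $b$, and the length of the opened block, and order the polynomials by this data. Choosing the $a$'s and $c$'s to be distinct generators so that $L_{11}$ and $R_{22}$ realise top forms (possible exactly because $r+t\le m$) and $b_v$ a suitable generator so that $f\neq 0$, each polynomial should be detected by a monomial of $R$ that no strictly earlier polynomial produces, making the coefficient matrix triangular with nonzero diagonal and forcing every $\alpha_p=0$ and every $\beta_q=0$. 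The given linear independence of $\mathfrak{B}_m$ modulo $T(UT_2(G^{(2m)}))$ guarantees that the length-$2$ block of this system is already nonsingular, so only the separation from the long-commutator family has to be established by hand.

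The main obstacle is precisely this last disentangling of the off-diagonal entries: one must show that the monomials produced by the long commutators of the type-\eqref{pol1} polynomials cannot be reproduced by any combination of the off-diagonal parts of products of length-$2$ commutators, and that no accidental cancellation occurs among Grassmann monomials of the same multidegree. Controlling this rests on the degree bookkeeping in $G^{(2m)}$ (each length-$2$ commutator forces even degree $\ge 2$, the ceiling being the top form $e_1\cdots e_{2m}$) together with the constraint $r+t\le m$, which is exactly what keeps the relevant top forms nonzero.
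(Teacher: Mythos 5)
Your setup is sound: the structural facts about $G^{(2m)}$ are correct, the splitting $UT_2(G^{(2m)})=D\oplus J$ with $J=G^{(2m)}e_{12}$ is legitimate, the observation that every polynomial of type \eqref{pol1} evaluates into $J$ with $(1,2)$-entry $L_{11}\,f\,R_{22}$ is right, and you correctly identify that everything hinges on the $(1,2)$-entries (the diagonal projection by itself only yields $\sum_q\beta_q q\in T(G^{(2m)})$, which is weaker than $\beta_q=0$ since $\mathfrak{B}_m$ is only assumed independent modulo the smaller ideal $T(UT_2(G^{(2m)}))$). But the proof stops exactly where the lemma begins. The assertion that, after ordering the polynomials by combinatorial data, ``each polynomial should be detected by a monomial of $R$ that no strictly earlier polynomial produces, making the coefficient matrix triangular with nonzero diagonal'' is never substantiated: no order is actually defined, no detecting monomial is exhibited, and no argument excludes cancellation. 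This is not a routine verification one may defer. Under a fully generic substitution $X_i=a_ie_{11}+b_ie_{12}+c_ie_{22}$, the $(1,2)$-entry of an opened long commutator and the $(1,2)$-entries of opened products of length-$2$ commutators (and of type-\eqref{pol1} polynomials whose long block has absorbed extra variables) produce heavily overlapping families of Grassmann monomials — even two type-\eqref{pol1} polynomials with the same variable partition but different leading index $j_1$ of the long commutator share most of their monomial supports. Sorting out these overlaps \emph{is} the content of the lemma, and you explicitly concede it as ``the main obstacle'' without resolving it. So the proposal is a framework plus a restatement of the problem, not a proof.

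It is instructive to compare with the paper's argument, which avoids this bookkeeping precisely by \emph{not} evaluating generically. For each polynomial $p$ of type \eqref{pol1} with data $(r,s,t)$, the paper substitutes $x_{i_l}\mapsto g_le_{11}$, $x_{j_2}\mapsto e_{12}$, the remaining long-block variables $\mapsto e_{11}$, and $x_{k_l}\mapsto g_{2r+l}e_{22}$, where the $g$'s are distinct Grassmann generators. The degeneracy buys a counting argument: under this evaluation, at most one commutator factor can involve $e_{12}$ and every other nonvanishing factor consumes two distinct $g$'s, so any product of more than $r+t+1$ commutators vanishes; since $s>2$, every term of the $\mathfrak{B}_m$-part is a product of at least $r+t+2$ length-$2$ commutators and dies wholesale, leaving only the type-\eqref{pol1} family to analyze. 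Your generic evaluation keeps all cross terms between the two families alive, which is exactly why you are forced into the disentangling you could not carry out. If you want to salvage your approach, you need either to prove the triangularity claim (defining the order and the detecting monomials explicitly, and checking signs, not just supports), or to degenerate your evaluation family in the paper's spirit so that one of the two families is annihilated identically before any monomial comparison is made.
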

\begin{proof}
	Consider a multilinear polynomial identity $f\in T(UT_2(G^{(2m)}))$, and write $f=f_1+f_2$, where $f_1$ is a linear combination of the polynomials \eqref{pol1}, and $f_2$ is a linear combination of polynomials in $\mathfrak{B}_m$. For some $s>2$, consider the following evaluation $\psi$:
	\begin{align*}
	&x_{i_1}=g_1e_{11},\ldots,x_{i_{2r}}=g_{2r}e_{11},\\%
	&x_{j_2}=e_{12},\\%
	&x_{j_1}=x_{j_3}=\cdots=x_{j_s}=e_{11},\\%
	&x_{k_1}=g_{2r+1}e_{22},\ldots,x_{k_{2t}}=g_{2r+2t}e_{22}.
	\end{align*}
	Then, any polynomial which is the product of more than $r+t+1$ commutators annihilate. Note that, since $s>2$, this evaluation gives $\psi(f_2)=0$. Among the polynomials of type \eqref{pol1}, there is a single polynomial having a nonzero evaluation, namely
	$$
	[x_{i_1},x_{i_2}]\ldots[x_{i_{2r-1}},x_{i_{2r}}][x_{j_1},\ldots,x_{j_s}][x_{k_1},x_{k_2}]\ldots[x_{k_{2t-1}},x_{k_{2t}}].
	$$
	This proves that $f_1=0$. So $f=f_2$. By the choice of $\mathfrak{B}_m$, we obtain $f_2=0$ and we are done.
\end{proof}

As a consequence, we have the following.
\begin{theorem}
	For $m\in\mathbb{N}$, set $\mathcal{T}_m=\langle[x_1,x_2,x_3][x_4,x_5,x_6],[x_1,x_2]\ldots[x_{2m+3},x_{2m+4}]\rangle$. Then,
	$$
	T(UT_2(G^{(2m)}))
	=\mathcal{T}_m+\mathfrak{T}_m,
	$$
	where $\mathfrak{T}_m$ is the set of polynomial identities given by linear combination of product of commutators of length 2.\qed
\end{theorem}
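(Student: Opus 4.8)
The plan is to prove the two set inclusions and to let the two preceding lemmas carry the weight. The inclusion $\mathcal{T}_m+\mathfrak{T}_m\subseteq T(UT_2(G^{(2m)}))$ is immediate: by Lemma \ref{lem2} the two generators of $\mathcal{T}_m$ are identities of $UT_2(G^{(2m)})$, so $\mathcal{T}_m\subseteq T(UT_2(G^{(2m)}))$, while $\mathfrak{T}_m$ consists of identities by its very definition. For the reverse inclusion I would pass to proper multilinear polynomials, invoking the standard fact (valid in characteristic zero for unitary algebras, see \cite{drenskybook}) that a T-ideal is completely determined by the proper multilinear polynomials it contains; it therefore suffices to check that $T:=T(UT_2(G^{(2m)}))$ and $J:=\mathcal{T}_m+\mathfrak{T}_m$ contain exactly the same proper multilinear polynomials.

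Write $\Gamma$ for the space of proper multilinear polynomials. Since $J\subseteq T$ we have $\Gamma\cap J\subseteq\Gamma\cap T$, so there is a natural degreewise surjection $\Gamma/(\Gamma\cap J)\twoheadrightarrow\Gamma/(\Gamma\cap T)$, and the two preceding lemmas are designed precisely to make it an isomorphism. The generating lemma asserts that the family \eqref{pol1} together with $\mathfrak{B}_m$ spans $\Gamma$ modulo $J$, so its image spans $\Gamma/(\Gamma\cap J)$; the independence lemma asserts that this same family is linearly independent modulo $T$, so its image in $\Gamma/(\Gamma\cap T)$ is linearly independent. A spanning set mapping to a linearly independent set under a surjection forces the surjection to be injective, whence $\Gamma\cap J=\Gamma\cap T$. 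Concretely, for $f\in\Gamma\cap T$ the generating lemma gives $f\equiv\sum_i c_i p_i\pmod J$ with the $p_i$ drawn from \eqref{pol1} and $\mathfrak{B}_m$; as $J\subseteq T$ this yields $\sum_i c_i p_i\in T$, and the independence lemma forces every $c_i=0$, so $f\in J$.

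With the proper multilinear parts matched, the characteristic-zero reduction returns $T=J$, which is the assertion of the theorem. The one genuinely delicate ingredient is this last reduction: everything the theorem needs about $UT_2(G^{(2m)})$ beyond the easy inclusion is encoded in its proper multilinear identities, and transferring the equality $\Gamma\cap J=\Gamma\cap T$ to a full equality of T-ideals relies on the decomposition of a multilinear polynomial into a commutative part times a proper part that is special to characteristic zero. By contrast, the combinatorial heart of the matter has already been dispatched by the generating and independence lemmas, so at the level of the theorem itself the remaining work is only their assembly, together with the care needed to glue the ``modulo $J$'' spanning statement to the ``modulo $T$'' independence statement via the inclusion $J\subseteq T$.
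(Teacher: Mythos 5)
Your proposal is correct and is essentially the paper's own argument: the paper states the theorem with a \qed as an immediate consequence of the two preceding lemmas, which is exactly the assembly you describe (the easy inclusion from Lemma \ref{lem2} and the definition of $\mathfrak{T}_m$, and the reverse inclusion by combining the spanning lemma modulo $\mathcal{T}_m+\mathfrak{T}_m$ with the independence lemma modulo $T(UT_2(G^{(2m)}))$, followed by the characteristic-zero reduction to proper multilinear polynomials for unitary algebras). The delicate final step you flag --- transferring the equality of proper multilinear parts to equality of T-ideals --- is left implicit in the paper as well, so your write-up, if anything, makes the intended proof more explicit.
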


\subsection{The case $UT_2(G^{(2m)})$, for $1\le m\le 2$} For small $k$, we can prove that we do not need the set $\mathfrak{T}_m$ of the previous result.

\begin{lemma}
	Consider the following family of polynomials:
	\begin{align}
	\begin{split}
	\label{pol3}
	[x_{i_1},x_{i_2}]\cdots[x_{i_{2t-1}},x_{i_{2t}}],\\%
	i_1>i_2,\quad i_3>i_4,\ldots,\quad i_{2t-1}>i_{2t}.
	\end{split}
	\end{align}
	Fix any $m\in\mathbb{N}$, and let $t\le\min\{3,m\}$. Then, the polynomials \eqref{pol3} of degree $2t$ generate the subspace spanned by all (multilinear) product of commutators of length 2 of degree $2t$ of $\mathbb{K}\langle X\rangle$ modulo $\mathcal{T}_m$, and they are linearly independent modulo $T(UT_2(G^{(2m)}))$.
\end{lemma}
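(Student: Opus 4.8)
The plan is to treat the two assertions separately, since the generation statement is essentially formal whereas the linear independence is where the real work lies.

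For the generation statement I would argue directly from the antisymmetry of the commutator. Any multilinear product of length-$2$ commutators of degree $2t$ has the shape $[x_{\sigma(1)},x_{\sigma(2)}]\cdots[x_{\sigma(2t-1)},x_{\sigma(2t)}]$ for some permutation $\sigma$, and applying $[x_a,x_b]=-[x_b,x_a]$ inside each of the $t$ commutators turns it, up to sign, into a polynomial of the form \eqref{pol3} with the same pairing and the same order of the commutators. Thus the $(2t)!/2^t$ polynomials \eqref{pol3} already span the whole subspace, and this reduction does not even consume $\mathcal{T}_m$; the only role of $\mathcal{T}_m$ here is to guarantee, since $t\le m$, that none of these products is forced to vanish by the length identity $[x_1,x_2]\cdots[x_{2m+3},x_{2m+4}]$.

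For the independence, after passing to the multilinear component (harmless in characteristic zero) I would suppose a relation $f=\sum_\pi\lambda_\pi P_\pi\in T(UT_2(G^{(2m)}))$, the sum ranging over the ordered pairings $\pi$ indexing \eqref{pol3}, and recover each $\lambda_\pi$ from suitable substitutions $x_i\mapsto a_ie_{11}+b_ie_{22}+c_ie_{12}$ with $a_i,b_i,c_i\in G^{(2m)}$. The structure of $UT_2(G^{(2m)})$ is decisive: on the two diagonal entries a product of commutators becomes a product of commutators in $G^{(2m)}$, and since commutators of elements of $G^{(2m)}$ are even, hence central, such a diagonal product depends only on the set of generators used and collapses to a single monomial $\pm2^t e_{j_1}\cdots e_{j_{2t}}$. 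Consequently purely diagonal substitutions cannot separate the $P_\pi$ (indeed the symmetric relations such as $[y_1,y_2][y_3,y_4]+[y_1,y_3][y_2,y_4]\equiv0$ hold on the diagonal), and all discriminating power must come from the $(1,2)$-entry, which for a product $M_1\cdots M_t$ of upper triangular matrices is the Leibniz-type sum $\sum_{k=1}^t\alpha_1\cdots\alpha_{k-1}\gamma_k\beta_{k+1}\cdots\beta_t$, where $\alpha_k,\beta_k$ are the diagonal commutators of the $k$-th factor and $\gamma_k$ its off-diagonal entry.

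The key device is then to kill all but one summand of this sum: specializing $b_i=0$ leaves only $\alpha_1\cdots\alpha_{t-1}\gamma_t$, isolating the \emph{last} commutator of $\pi$, while $a_i=0$ symmetrically isolates the \emph{first}. Taking $a_i=e_i$ and letting the $c_i$ range over scalars and generators produces, for each $\pi$, a Grassmann monomial whose shape records which pair sits in the isolated slot; since $t\le\min\{3,m\}$ there are at most three commutator-slots and at least $2m\ge2t$ generators to play with, so finitely many such evaluations yield a linear system in the $\lambda_\pi$ that one checks is nonsingular, forcing every $\lambda_\pi=0$.

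The main obstacle is precisely the nonsingularity of this evaluation matrix: because the diagonal part collapses and each $P_\pi$ contributes $t$ interfering terms to the $(1,2)$-entry, the signs and the coincidences among the resulting monomials must be tracked carefully, and one must verify that for $t\le3$ the chosen evaluations separate all $(2t)!/2^t$ ordered pairings. This is also where the hypothesis $t\le3$ is indispensable: for $t\ge4$ genuine dependencies appear (these are the nontrivial identities collected in $\mathfrak{T}_m$), so any successful separation argument must break down there, and $t\le3$ marks exactly the threshold below which the off-diagonal entry still retains enough information to distinguish all the basis products.
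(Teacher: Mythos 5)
Your spanning argument is correct (and slightly more economical than the paper's, which invokes Theorem \ref{basis_utn}): antisymmetry inside each commutator rewrites any multilinear product of length-$2$ commutators as $\pm$ a member of \eqref{pol3}. The genuine gap is in the independence part, and it is worse than the unverified nonsingularity you flag: the evaluations you commit to \emph{provably cannot} separate the family when $t=3$. Under an all-$b_i=0$ substitution each commutator evaluates to $\alpha_k e_{11}+\gamma_k e_{12}$, so a product of three of them equals $\alpha_1\alpha_2\alpha_3e_{11}+\alpha_1\alpha_2\gamma_3e_{12}$; since each $\alpha_k$ is a commutator of elements of $G^{(2m)}$, it is even and hence central, so this matrix is unchanged if the first two factors are transposed. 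Symmetrically, all-$a_i=0$ substitutions are blind to transposing the last two factors. Hence, for three disjoint pairs $A,B,C$ from \eqref{pol3}, the signed sum
\[
f=ABC-ACB-BAC+BCA+CAB-CBA
\]
vanishes under \emph{every} evaluation of both of your families: group it as $(ABC-BAC)+(CAB-ACB)+(BCA-CBA)$ for the first family and as $(ABC-ACB)+(BCA-BAC)+(CAB-CBA)$ for the second. But $f$ is not an identity of $UT_2(G^{(2m)})$: with $A=[x_{i_1},x_{i_2}]$, $B=[x_{i_3},x_{i_4}]$, $C=[x_{i_5},x_{i_6}]$, the substitution $x_{i_1}=g_1e_{11}$, $x_{i_2}=g_2e_{11}$, $x_{i_3}=e_{12}$, $x_{i_4}=e_{22}$, $x_{i_5}=g_3e_{22}$, $x_{i_6}=g_4e_{22}$ (with $g_1,\ldots,g_4$ distinct anticommuting generators) kills all six orderings except $ABC$ and returns $[g_1,g_2][g_3,g_4]e_{12}=4g_1g_2g_3g_4e_{12}\neq0$. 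So the coefficient vector of $f$ lies in the kernel of your evaluation matrix, which is therefore singular no matter how carefully signs and coincidences are tracked.

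The substitution just displayed is exactly the paper's proof, and it is the repair of yours: for each target polynomial of \eqref{pol3} one takes a \emph{two-sided} evaluation, sending the leading pair(s) into $G^{(2m)}e_{11}$, one pair to $(e_{12},e_{22})$, and the trailing pair(s) into $G^{(2m)}e_{22}$. Since anything supported on $e_{11}$ must stand to the left of the $e_{12}$-slot and anything supported on $e_{22}$ to its right, this single evaluation annihilates every member of the family except the targeted one, so each coefficient is isolated directly and no linear system needs to be inverted. The collapsing phenomenon you yourself observe (products of diagonal commutators are order-insensitive because commutators in $G^{(2m)}$ are central) is precisely why one-sided specializations lose the ordering of the pairs; splitting the diagonal support between $e_{11}$ before the off-diagonal slot and $e_{22}$ after it is what recovers that information.
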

\begin{proof}
	The assertion that these polynomials generate all multilinear product of commutators of length 2 modulo $\mathcal{T}_m$ is direct from Theorem \ref{basis_utn}. So we only need to prove the linearly independence part. If $t=1$, then there is nothing to do.
	
	So, let $f$ be a linear combination of polynomials \eqref{pol3}, $\deg f=4$. The evaluation
	\begin{align*}
	x_{i_1}&=g_1e_{11},\\%
	x_{i_2}&=g_2e_{11},\\%
	x_{i_3}&=e_{12},\\%
	x_{i_4}&=e_{22},
	\end{align*}
	will make all product of commutators zero, but $[x_{i_1},x_{i_2}][x_{i_3},x_{i_4}]$. Thus, the elements of degree 4 are linearly independent.
	
	Similarly, if $\deg f=6$, then the evaluation
	\begin{align*}
	x_{i_1}&=g_1e_{11},\\%
	x_{i_2}&=g_2e_{11},\\%
	x_{i_3}&=e_{12},\\%
	x_{i_4}&=e_{22},\\%
	x_{i_5}&=g_3e_{22},\\%
	x_{i_6}&=g_4e_{22},
	\end{align*}
	will make all product of commutators zero, but $[x_{i_1},x_{i_2}][x_{i_3},x_{i_4}][x_{i_5},x_{i_6}]$. This concludes the proof.
\end{proof}

As a consequence, if $1\le m\le 2$, then $\mathfrak{T}_m\subseteq\langle[x_1,x_2]\ldots[x_{2m-1},x_{2m}]\rangle$. Thus, using the lemmas from the previous section, we see that
$$
\mathcal{T}_m\subseteq  T(UT_2(G^{(2m)}))\subseteq\mathcal{T}_m.
$$
We proved:
\begin{theorem}
	For $1\le m\le2$, set $$
	\mathcal{T}_m=\langle[x_1,x_2,x_3][x_4,x_5,x_6],[x_1,x_2]\ldots[x_{2m+3},x_{2m+4}]\rangle.
	$$
	Then, $T(UT_2(G^{(2m)}))=T(UT_2(G^{(2m+1)}))=\mathcal{T}_m.$
	\qed
\end{theorem}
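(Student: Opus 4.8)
The plan is to build everything on the decomposition $T(UT_2(G^{(2m)}))=\mathcal{T}_m+\mathfrak{T}_m$ from the previous theorem, together with two facts already in hand: the identity of T-ideals $T(UT_2(G^{(2m)}))=T(UT_2(G^{(2m+1)}))$, which lets me argue only for $UT_2(G^{(2m)})$, and the inclusion $\mathcal{T}_m\subseteq T(UT_2(G^{(2m)}))$ from Lemma \ref{lem2}, which supplies one inclusion for free. After these reductions the whole theorem collapses to the single redundancy claim that, for $1\le m\le2$, the auxiliary set contributes nothing, i.e.\ $\mathfrak{T}_m\subseteq\mathcal{T}_m$.

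To establish this I would take an arbitrary $f\in\mathfrak{T}_m$ and normalize it: since $\operatorname{char}\mathbb{K}=0$ I may assume $f$ is multilinear, and since $f$ is a linear combination of products of commutators of length $2$, I may assume by homogeneity that all these products have the same number $t$ of commutators. The argument then splits on $t$. If $t\ge m+2$, every such product is a substitution instance of $[x_1,x_2]\ldots[x_{2m+3},x_{2m+4}]$ (up to right multiplication by further factors), so $f\in\mathcal{T}_m$ at once. If $t\le m$, which for $m\le2$ is exactly the range $t\le\min\{3,m\}$ of the preceding lemma, then modulo $\mathcal{T}_m$ I may rewrite $f$ as a combination $\sum_ic_ip_i$ of the descending products \eqref{pol3}; since the $p_i$ are linearly independent modulo $T(UT_2(G^{(2m)}))$ while $f$ is itself an identity, each $c_i$ vanishes and $f\in\mathcal{T}_m$.

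The one value left uncovered between these two regimes, and the step I expect to be the main obstacle, is the borderline degree $t=m+1$ (that is, $t=2$ for $m=1$ and $t=3$ for $m=2$). Here I must still certify that the descending products of $m+1$ commutators of length $2$ are linearly independent modulo $T(UT_2(G^{(2m)}))$, so that, exactly as above, an identity among them is forced into $\mathcal{T}_m$. I would do this by pushing the separating-evaluation method of the preceding lemma to its limit: to isolate a prescribed descending product I send $a$ of its commutators to $e_{11}$, one transition commutator evaluated on matrix units so as to produce the factor $e_{12}$, and the remaining $b=m-a$ commutators to $e_{22}$, evaluating the diagonal commutators on distinct odd generators so that their products survive. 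The resulting value is $(\text{product of $a$ commutators})(\text{product of $b$ commutators})\,e_{12}$, and this evaluation consumes precisely $2a+2b=2m$ anticommuting generators, exactly the number available in $G^{(2m)}$. The delicate part is to choose the assignments so that this evaluation annihilates every other descending product while leaving the target nonzero; the construction only just fits inside $G^{(2m)}$, and it is this tight fit that I expect to require the most care.

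Finally I would record where the hypothesis $1\le m\le2$ is genuinely used, namely that it forces the borderline degree to satisfy $t=m+1\le3$, which is the largest degree for which the separating evaluations above (degrees $4$ and $6$ in the lemma) are available. For $m\ge3$ the gap sits at $t=m+1\ge4$, beyond the reach of this technique: the descending products of four or more commutators of length $2$ need no longer be independent modulo $T(UT_2(G^{(2m)}))$, so $\mathfrak{T}_m$ may genuinely enlarge $\mathcal{T}_m$ and the present argument breaks down.
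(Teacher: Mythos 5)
Your proposal is correct and takes essentially the same route as the paper: the same reduction to $\mathfrak{T}_m\subseteq\mathcal{T}_m$ via the decomposition $T(UT_2(G^{(2m)}))=\mathcal{T}_m+\mathfrak{T}_m$ and the equality $T(UT_2(G^{(2m)}))=T(UT_2(G^{(2m+1)}))$, the same disposal of products of $t\ge m+2$ commutators by T-ideal membership, and the same separating evaluations on elements $g_ie_{11}$, $e_{12}$, $e_{22}$, $g_je_{22}$. The ``borderline'' case $t=m+1$ that you flag as the main obstacle is in fact already covered by the paper's preceding lemma: its stated bound $t\le\min\{3,m\}$ is conservative, since its proof consists precisely of the degree-$4$ and degree-$6$ evaluations you describe, which consume only $2t-2\le 2m$ anticommuting generators and therefore work in $G^{(2m)}$ for all $t\le\min\{3,m+1\}$.
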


\section{Polynomial identities for $F_k(UT_2(G))$\label{sec5}}
Let us denote by $T_1$ the T-ideal generated by $[x_1,x_2,x_3]$ and by $T_2$ the T-ideal generated by $[x_1,x_2,x_3][x_4,x_5,x_6]$.

\begin{lemma}\label{reordering} Let $n\geq 1$. If $f$ is defined as
	\[f=u_0[v_1,v_2,v_3]u_1[w_{\sigma(1)},w_{\sigma(2)}]u_2\cdots
	u_{n}[w_{\sigma(2n-1)},w_{\sigma(2n)}]u_{n+1},\] with $u_i,v_i,w_i\in K\langle X \rangle$ for all $i$ and
	$\sigma \in S_{2n}$, then
	\[f = (-1)^{\sigma}u_0[v_1,v_2,v_3][w_1,w_2]\cdots [w_{2n-1},w_{2n}]u_1 \cdots u_{n+1} \mod T_2.\]
\end{lemma}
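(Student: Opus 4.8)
The plan is to reduce everything to one structural fact: modulo $T_2$, a product of two third-order commutators $[a,b,c]$ separated by an arbitrary polynomial vanishes. This is precisely Lemma \ref{lem1}(1), and since $T_2$ is a T-ideal it holds for an arbitrary (not necessarily multilinear) separating polynomial, and for arbitrary substitutions in the two outer commutators. Write $g=[v_1,v_2,v_3]$ and $h_j=[w_{\sigma(2j-1)},w_{\sigma(2j)}]$, so that $f=u_0\,g\,u_1h_1u_2h_2\cdots u_nh_n u_{n+1}$. Throughout the argument $g$ stays the leftmost commutator (the prefix $u_0$ is harmless, as $T_2$ is a two-sided ideal), and every correction term I produce will contain $g$ together with a second third-order commutator, hence will lie in $T_2$ by Lemma \ref{lem1}(1).

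First I would collect the inner factors $u_1,\dots,u_n$ to the right. Moving a factor $u_i$ across an adjacent length-two commutator $h_j$ costs the correction $[u_i,h_j]=-[w_a,w_b,u_i]$, itself a third-order commutator; re-inserting this correction yields a word of the form $u_0\,g\,P\,[w_a,w_b,u_i]\,Q$ with $P,Q$ arbitrary, which is $0$ modulo $T_2$ by Lemma \ref{lem1}(1). Since the $u_i$ are only commuted past the $h_j$ and never past one another, their left-to-right order is preserved, and iterating gives
\[
f\equiv u_0\,g\,h_1h_2\cdots h_n\,u_1u_2\cdots u_{n+1}\pmod{T_2}.
\]

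It then remains to reorder the arguments of $h_1\cdots h_n=[w_{\sigma(1)},w_{\sigma(2)}]\cdots[w_{\sigma(2n-1)},w_{\sigma(2n)}]$ into their natural order. I would sort the sequence $(\sigma(1),\dots,\sigma(2n))$ by adjacent transpositions of positions, of which there are two kinds. A transposition of positions $2j-1,2j$ is the internal antisymmetry $[w_a,w_b]=-[w_b,w_a]$, which is exact and carries a sign $-1$. A transposition of positions $2j,2j+1$ sends $[w_a,w_b][w_c,w_d]$ to $-[w_a,w_c][w_b,w_d]$; this is the content of Lemma \ref{lem1}(3), which I first extend to allow material after the symmetric pair: for arbitrary $A,B$ one has $g\,A\,([z_1,z_2][z_3,z_4]+[z_1,z_3][z_2,z_4])\,B\equiv0$, proved by commuting $B$ past the symmetric pair (the commutators so produced are again third-order and die against $g$ by Lemma \ref{lem1}(1)) and then applying Lemma \ref{lem1}(3) to $g\,AB\,(\cdots)$. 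Each such transposition again carries $-1$. These two families exhaust all adjacent transpositions of $\{1,\dots,2n\}$, so they generate $S_{2n}$, and sorting $\sigma$ accumulates the sign $(-1)^{\ell}=\operatorname{sgn}(\sigma)=(-1)^\sigma$, which is exactly the claimed identity.

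The main obstacle I anticipate is bookkeeping rather than conceptual: one must verify that $g$ genuinely persists as a left factor at every step and that each correction term has the precise shape (third-order commutator)$\cdot$(polynomial)$\cdot$(third-order commutator) demanded by Lemma \ref{lem1}(1), including inside the extension of Lemma \ref{lem1}(3) with trailing material. Once that shape is checked the corrections vanish automatically, and the final sign is forced, since every elementary move is a single transposition contributing $-1$.
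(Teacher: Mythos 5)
Your proposal is correct and follows essentially the same route as the paper: first slide the factors $u_1,\dots,u_n$ to the right, absorbing the triple-commutator correction terms against the leading $[v_1,v_2,v_3]$ modulo $T_2$, and then sort the length-two commutators using the swap relation $[x_1,x_2][x_3,x_4]\equiv-[x_1,x_3][x_2,x_4]$, each swap contributing a sign $-1$ so that $(-1)^{\sigma}$ accumulates. The only difference is organizational: the paper performs the sorting modulo $T_1$ (where a product of commutators is alternating in its entries) and then absorbs the $T_1$-discrepancy into $T_2$ via the leading triple commutator, whereas you sort in situ modulo $T_2$ by adjacent transpositions together with an extension of Lemma \ref{lem1}(3) to allow trailing factors --- both arguments hinge on the same underlying identity.
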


\begin{proof}
	After using the identity
	\[c[a,b]=[a,b]c-[a,b,c],\]
	$n$ times, we obtain
	\begin{equation}\label{igualda}
	f = u_0[v_1,v_2,v_3][w_{\sigma(1)},w_{\sigma(2)}]\cdots [w_{\sigma(2n-1)},w_{\sigma(2n)}]u_1\cdots u_{n+1} \mod T_2.
	\end{equation}
	Since
	\[[x_1,x_2][x_3,x_4] = -[x_1,x_3][x_2,x_4] \mod T_1,\]
	the identity
	\[[w_{\sigma(1)},w_{\sigma(2)}]\cdots
	[w_{\sigma(2n-1)},w_{\sigma(2n)}] = (-1)^{\sigma}[w_1,w_2]\cdots [w_{2n-1},w_{2n}] \mod T_1
	\] holds.
	The above identity and (\ref{igualda}) imply
	\[f =  (-1)^{\sigma}u_0[v_1,v_2,v_3][w_1,w_2]\cdots [w_{2n-1},w_{2n}]u_1 \cdots u_{n+1} \mod T_2.\]
\end{proof}

\begin{remark}
	An analogous version of the above lemma, is also true, if one considers the factor $[v_1,v_2,v_3]$ at the end of the monomial. The proof is completely analogous.
\end{remark}

\begin{lemma}\label{idF2}
	If $m\geq 1$, the following polynomials are identities for $F_{k}(UT_2(G))$, for $k\leq 2m+1$:
	\begin{enumerate}
		\item $[x_1,x_2,x_3][x_4,x_5,x_6]$;
		\item $[x_1,x_2,x_3][x_4,x_5]\cdots [x_{2m+4},x_{2m+5}]$;
		\item $[x_1,x_2]\cdots [x_{2m+1},x_{2m+2}][x_{2m+3},x_{2m+4},x_{2m+5}]$;
		\item $[x_1,x_2]\cdots [x_{4m+3},x_{4m+4}]$.
	\end{enumerate}
\end{lemma}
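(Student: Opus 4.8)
The plan is to work throughout in the concrete model of the Example, viewing $F_k(UT_2(G))$ as the subalgebra $\mathcal{U}_k\subseteq UT_2(F(\mathfrak G))$ generated by the generic matrices $\xi_1,\ldots,\xi_k$, whose entries lie in the relatively free Grassmann algebra $F(\mathfrak G)$. A polynomial is then checked to be an identity by substituting arbitrary matrices $Y=\left(\begin{smallmatrix}p & q\\0 & r\end{smallmatrix}\right)\in\mathcal{U}_k$ and verifying that every entry of the resulting matrix is zero. The crucial structural input is the behaviour of commutators in $F(\mathfrak G)$: since $[x_1,x_2,x_3]=0$ holds there, every commutator is central and products of commutators commute; realizing $F(\mathfrak G)$ by generic elements of $G$ one has $[g,h]=2g^{(1)}h^{(1)}$ with $(g^{(1)})^2=0$, so two commutators sharing a variable kill the product. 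I would record as a preliminary observation the consequence that in the subalgebra of $F(\mathfrak G)$ generated by any $k$ of its free generators — a copy of $F_k(\mathfrak G)$ — every product of more than $\lfloor k/2\rfloor$ commutators vanishes (a product of $n$ commutators has ``odd degree'' at least $2n$, which must vanish once $2n>k$).

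Next I would exploit the upper-triangular structure. The $(1,1)$-entries of products in $\mathcal{U}_k$ live in the subalgebra $R_1$ of $F(\mathfrak G)$ generated by the $(1,1)$-entries of $\xi_1,\ldots,\xi_k$, and the $(2,2)$-entries in the subalgebra $R_2$ generated by the $(2,2)$-entries; both $R_1$ and $R_2$ are isomorphic to $F_k(\mathfrak G)$, so in each of these two diagonal \emph{pools} a product of more than $\lfloor k/2\rfloor$ commutators is $0$. A double commutator $[Y,Y']$ has diagonal entries $[p,p']$ and $[r,r']$, genuine commutators in $R_1$ and $R_2$ respectively; a triple commutator $[Y,Y',Y'']$ has both diagonal entries equal to $0$ (second commutators of central elements), hence is strictly upper triangular.

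With these facts the four statements reduce to entry counting, using $\lfloor k/2\rfloor\le m$ for $k\le 2m+1$. Statement (1) is immediate, since a product of two strictly upper triangular matrices is $0$ (it is in fact an identity of $UT_2(G)$ itself, valid for all $k$). For (2), writing the polynomial as $D\,C_1\cdots C_{m+1}$ with $D$ the strictly upper triangular triple commutator and the $C_i$ double commutators, the product equals $\left(\begin{smallmatrix}0 & \delta\gamma\\0 & 0\end{smallmatrix}\right)$, where $\gamma$ is the $(2,2)$-entry of $C_1\cdots C_{m+1}$, a product of $m+1$ commutators in $R_2$; since $m+1>\lfloor k/2\rfloor$ this is $0$. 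Statement (3) is symmetric, using $R_1$. For (4) the polynomial is $C_1\cdots C_{2m+2}$: its $(1,1)$- and $(2,2)$-entries are products of $2m+2$ commutators and vanish trivially, while its $(1,2)$-entry is $\sum_{s}\big(\text{$(1,1)$-entry of }C_1\cdots C_{s-1}\big)\cdot\big(\text{off-diagonal of }C_s\big)\cdot\big(\text{$(2,2)$-entry of }C_{s+1}\cdots C_{2m+2}\big)$, where the first factor is a product of $s-1$ commutators in $R_1$ and the last a product of $2m+2-s$ commutators in $R_2$; a nonzero term would force both $s-1\le\lfloor k/2\rfloor$ and $2m+2-s\le\lfloor k/2\rfloor$, whence $2m+1\le 2\lfloor k/2\rfloor\le k\le 2m+1$, impossible since $2\lfloor k/2\rfloor$ is even and $2m+1$ is odd.

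I expect the main obstacle to be the bookkeeping in (4). One must be sure that the two diagonal pools $R_1$ and $R_2$ are genuinely independent, so that commutators counted in $R_1$ and in $R_2$ cannot interact to rescue a term, and that the off-diagonal middle factor $\big(\text{off-diagonal of }C_s\big)$ — whatever its commutator degree — does not lower the count in either pool. The parity clash $2\lfloor k/2\rfloor\ne 2m+1$ is exactly what makes the bound $k\le 2m+1$ sharp (for $k=2m+2$ the values $s\in\{m+1,m+2\}$ survive), so pinning down the ``at least $s-1$'' and ``at least $2m+2-s$'' lower bounds precisely is the delicate point; once the vanishing-of-long-commutator-products fact for $F_k(\mathfrak G)$ is established, everything else is a direct matrix computation.
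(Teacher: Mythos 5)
Your proposal is correct, and it takes a genuinely different route from the paper's own proof. The paper never substitutes arbitrary elements of the model: it first reduces, via multilinearity, to substitutions by monomials in the generic matrices $\xi_i$, then invokes Lemma \ref{reordering} together with $[ab,c]=a[b,c]+[a,c]b$ to reduce (2) and (3) to substitutions by the $\xi_i$ themselves, and for (4) it expands an arbitrary monomial substitution into terms $u_0[y_1,y_2]u_1\cdots u_{2m+1}[y_{4m+3},y_{4m+4}]u_{2m+2}$ and uses the already-proved identities (2) and (3) to push all the interspersed factors $u_i$ into the middle, ending with a product of two matrices proportional to $e_{12}$. You instead plug in arbitrary elements of $\mathcal{U}_k$ from the start and do entry-wise commutator counting in the two diagonal copies $R_1,R_2\cong F_k(\mathfrak G)$, killing (4) by the parity clash $2\lfloor k/2\rfloor\neq 2m+1$. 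Both arguments ultimately rest on the same quantitative fact -- a product of more than $\lfloor k/2\rfloor$ commutators vanishes in $F_k(\mathfrak G)$ -- which the paper asserts (for $n=m+1$, $k=2m+1$) by identifying the diagonal algebras with relatively free algebras of rank $2m+1$ of $G$, and which your odd-degree argument via the supercommutative model justifies directly. What your approach buys: it is more self-contained (no reordering lemma, no reduction to generators, the four items are proved independently rather than (4) depending on (2) and (3)), it treats all $k\le 2m+1$ uniformly, and it makes visible exactly why the bound $k\le 2m+1$ is what makes (4) work. What the paper's approach buys: its manipulations (Lemma \ref{reordering}, moving factors across commutators modulo $T_2$) are the same tools used elsewhere in the paper and would still function in situations where the diagonal vanishing is only available for distinguished elements. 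Two small remarks: the worry you flag about ``independence of the pools'' is unfounded, since a term $a_1\cdots a_{s-1}\,b_s\,d_{s+1}\cdots d_{2m+2}$ dies as soon as either the $R_1$-factor or the $R_2$-factor is zero, with no possible interaction through the middle entry $b_s$; and the identification of the diagonal entries of elements of $\mathcal{U}_k$ with elements of the subalgebras generated by the $k$ diagonal free generators (hence with $F_k(\mathfrak G)$) is exactly the observation the paper itself makes, so that step needs no further care.
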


\begin{proof}
	First we observe that it is enough to prove the result for $k=2m+1$. We use the model for the relatively free algebra of rank $2m+1$ of $UT_2(G)$ given in section 2, i.e., the subalgebra $\mathcal{U}_{2m+1}$ of $UT_2(F(G))$ generated by the generic matrices $\xi_1,\dots,\xi_{2m+1}$, where
	
	\[\xi_i=\begin{pmatrix}
	x_{11}^{(1)}+T(G) & x_{12}^{(i)}+T(G)\\
	0                 & x_{22}^{(2)}+T(G)
	\end{pmatrix}\]
	We observe that the set $A_{1,1}=\{p\,|\, p \text{ is the entry (1,1) of some element of } \mathcal{U}_{2m+1}\}$ is an algebra, isomorphic to $F_{2m+1}(UT_2(G))$ of $G$, in the variables $x_{11}^{(1)}, \dots, x_{11}^{(2m+1)}$.
	Analogously, the set $A_{2,2}=\{p\,|\, p \text{ is the entry (2,2) of some element of } \mathcal{U}_{2m+1}\}$ is an algebra, isomorphic to the relatively free algebra of rank $2m+1$ of $G$, in the variables $x_{22}^{(1)}, \dots, x_{22}^{(2m+1)}$. In particular, they satisfy the polynomial identities 
	\[[x_1,x_2,x_3]  \text{ and } [x_1,x_2]\cdots [x_{2m+1},x_{2m+2}].\]
	It is clear that (1) is a polynomial identity, since it is an identity for $UT_2(G)$.
	
	To show that (2) and (3) are identities, it is enough to verify they vanish under substitution of variables by monomials in the variables $\xi_i$. By Lemma \ref{reordering} and using the identity $[ab,c]=a[b,c]+[a,c]b$, it is enough to show that they vanish under substitution of variables by the generic elements $\xi_i$, $i\in {1,\dots, 2m+1}$.
	
	Now one verifies that the substitution of such elements in the polynomials $[x_1,x_2,x_3]$ and $[x_4,x_5]\cdots [x_{2m+4},x_{2m+5}]$ yields matrices which are multiple of the unit matrix $e_{12}$ by an element of $F(G)$, since these polynomials are identities for $A_{1,1}$ and for $A_{2,2}$. As a consequence, the product of the evaluations of such polynomials in both orders vanishes, showing that (2) and (3) are identities for $F_{2m+1}(UT_2(G))$.
	
	Again, to prove that (4) is an identity, it is enough to verify it vanishes under substitution of variables by monomials in the variables $\xi_i$. After using several times the identity $[ab,c]=a[b,c]+[a,c]b$, one obtains a linear combination of elements of the form
	\[u_0[y_1,y_2]u_1[y_3,y_4]u_2\cdots u_{2m+1}[y_{4m+3},y_{4m+4}]u_{2m+2},\]
	where the $u_i$ are elements of $\mathcal{U}_{2m+1}$ and the $y_i$ are generic matrices $\xi_j$. 
	
	If $0<i\leq 2m+1$, then, by using the identity $c[a,b]=[a,b]c-[a,b,c]$ in the factor $u_i[y_{2i+1},y_{2i+2}]$, it turns into $[y_{2i+1},y_{2i+2}]u_i-[y_{2i+1},y_{2i+2},u_i]$. Now, since $i\leq 2m+1$ one observes that using Lemma \ref{reordering} and the fact that (2) is an identity, the component of the sum corresponding to the triple commutator vanishes.
	Applying such procedure several times, we obtain that the elements $u_1,\dots,u_{2m+1}$ can be moved to the middle of the monomial (just after the $(m+1)$-th commutator).
	In a analogous way, using the remark after Lemma \ref{reordering} and the fact that (3) is an identity, we obtain that if $m+1<i\leq 2m+1$, the elements $u_i$ can also be moved to the middle of the monomial, i.e.,

	\[u_0[y_1,y_2]u_1[y_3,y_4]u_2\cdots u_{2m+1}[y_{4m+3},y_{4m+4}]u_{2m+2}=\]
	\[=u_0 [y_1,y_2]\cdots[y_{2m+1},y_{2m+2}]u_1\cdots u_{2m+1}[y_{2m+3},y_{2m+4}]\cdots[y_{4m+3},y_{4m+4}]u_{m+2}\]
	
	Since the product of $2m+1$ commutators is a multiple of $e_{12}$, and the above is a product of two multiples of $e_{12}$, we obtain that the above element is zero in $\mathcal{U}_{2m+1}$.
\end{proof}

\section{Conclusion}
Describing the ideal of identities of the relatively free algebras of finite rank of a given variety may be a very difficult problem. Even the simple case of $UT_2(G)$ is still open even though it seems to be possible to prove it with the canonical techniques.

The role played by the relatively free algebras of finite rank in the description of the subvarieties of a given variety (at least up to asymptotic equivalence) must be studied.

An interesting problem is to consider $\mathfrak V$ a variety of algebras generated by $G(B)$, where $B=B_0+B_1$ is a finite dimensional superalgebra and to investigate if, given an $n$, there exists an $m$ such that $T(F_n(G(B)))=T(G^{(m)}(B))$ (since we have verified that in the variety generated by $UT_2(G)$, $n=m$ does not hold, as in the previously known cases).

In order to know if such questions are true in some generality, it is necessary first to study it for some simpler examples.

For varieties that we know the structure of the $S_n$-module $P_n(\mathcal V)$ (or $\Gamma_n(\mathcal V)$), of multilinear (or proper multilinear) polynomials modulo the identities of $\mathfrak V$, this may be approached by verifying which of the generators of such modules are identities of $F_m(\mathfrak V)$. We will investigate this problem in future projects.

\section{Funding}
T. C. de Mello was supported by grants
\#2018/15627-2 and \#2018/23690-6, S\~ao Paulo Research Foundation (FAPESP), Brazil.

F. Y. Yasumura was suported in part by the Coordena\c c\~ao de Aperfei\c coamento de Pessoal de N\'ivel Superior - Brasil (CAPES) - Finance Code 001.


\bibliographystyle{abbrv}
\bibliography{referencias}

\end{document}